\newcommand{\Ueberschrift}{Convex Fujita numbers are not determined by the fundamental group}
\newcommand{\Kurztitel}{Convex Fujita numbers and $\pi_1$}
\DeclareMathOperator{\rH}{H}
\DeclareMathOperator{\rh}{h}
\newcommand{\bC}{{\mathbb C}}
\newcommand{\bP}{{\mathbb P}}
\newcommand{\bZ}{{\mathbb Z}}
\newcommand{\cL}{{\mathscr L}}
\newcommand{\cM}{{\mathscr M}}
\newcommand{\cO}{{\mathscr O}}
\newcommand{\dM}{{\mathcal M}}
\newcommand{\dO}{{\mathcal O}}
\newcommand{\inj}{\hookrightarrow}
\DeclareMathOperator{\pr}{pr}
\DeclareMathOperator{\Hom}{Hom}
\DeclareMathOperator{\Aut}{Aut}
\DeclareMathOperator{\Spec}{Spec}
\DeclareMathOperator{\Div}{Div}
\DeclareMathOperator{\Pic}{Pic}
\newcommand{\redu}{{\rm red}}
\DeclareMathOperator{\Alb}{Alb}
\newcommand{\Gm}{\bG_m}
\DeclareMathOperator{\NS}{NS}
\DeclareMathOperator{\R}{R}
\DeclareMathOperator{\Hilb}{Hilb}
\DeclareMathOperator{\Sym}{Sym}
\newtheorem{thm}{Theorem}[section]
\newtheorem{prop}[thm]{Proposition}
\newtheorem{lem}[thm]{Lemma}
\newtheorem{cor}[thm]{Corollary}
\newtheorem{conj}[thm]{Conjecture}
\newtheorem{thmABC}{Theorem}
\newtheorem{corABC}[thmABC]{Corollary}
\newtheorem{propABC}[thmABC]{Proposition}
\theoremstyle{definition}
\newtheorem{defi}[thm]{Definition}
\theoremstyle{remark}
\newtheorem{rmk}[thm]{Remark}
\newtheorem{rem}[thm]{Remark}
\newtheorem{ex}[thm]{Example}
\newenvironment{pro*}[1][Proof]{{\it{#1:}} }{}
\newenvironment{pro**}[1][]{{\it{#1}} }{\hfill $\square$}
\numberwithin{equation}{section}
\newcommand{\tref}[1]{Theorem~\ref{#1}}
\newcommand{\secref}[1]{\S\ref{#1}}
\newcommand{\cref}[1]{Corollary~\ref{#1}}
\newcommand{\conjref}[1]{Conjecture~\ref{#1}}
\newcommand{\lref}[1]{Lemma~\ref{#1}}
\newcommand{\pref}[1]{Proposition~\ref{#1}}
\newcommand{\eps}{\varepsilon}
\def\mc{\mathscr}
\def\Gm{\mathbb{G}_m}
\def\pr{\text{pr}}
\def\P{\mathbb{P}}
\def\bP{\P}
\def\Q{\mathbb{Q}}
\def\R{\mathbb{R}}
\def\cL{\mc{L}}
\def\cM{\mc{M}}
\def\cO{\mc{O}}
\DeclareMathOperator{\conFN}{Fu} 
\newcommand{\dleq}{\ensuremath{\,\leq\,}}
\newcommand{\deq}{\ensuremath{\stackrel{\textrm{def}}{=}}}
\definecolor{intOrange}{rgb}{1.0,.310,.0}
\begin{document}

\hrule width\hsize

\vskip 0.5cm

\title[\Kurztitel]{\Ueberschrift} 

\author{Jiaming Chen}
\address{Jiaming Chen, Institut f\"ur Mathematik, Goethe--Universit\"at Frankfurt, Robert-Mayer-Stra\ss e {6--8},
60325~Frankfurt am Main, Germany} 
\email{\tt chen@math.uni-frankfurt.de}

\author{Alex K\"{u}ronya}
\address{Alex K\"uronya, Institut f\"ur Mathematik, Goethe--Universit\"at Frankfurt, Robert-Mayer-Stra\ss e {6--8},
60325~Frankfurt am Main, Germany} 
\email{\tt kuronya@math.uni-frankfurt.de}

\author{Yusuf Mustopa}
\address{Yusuf Mustopa, University of Massachusetts Boston, Department of Mathematics, Wheatley Hall, 100 William T Morrissey Blvd, Boston, MA 02125, USA}
\email{Yusuf.Mustopa@umb.edu}

\author{Jakob Stix}
\address{Jakob Stix, Institut f\"ur Mathematik, Goethe--Universit\"at Frankfurt, Robert-Mayer-Stra\ss e {6--8},
60325~Frankfurt am Main, Germany} 
\email{\tt stix@math.uni-frankfurt.de} 
	
\thanks{The authors acknowledge support by Deutsche Forschungsgemeinschaft  (DFG) through the Collaborative Research Centre TRR 326 "Geometry and Arithmetic of Uniformized Structures", project number 444845124.}
	
\maketitle

\date{\today} 

\maketitle

\begin{quotation} 
\noindent \small {\bf Abstract} --- We study effective global generation of adjoint line bundles on smooth projective varieties. To measure the effectivity we introduce the concept of the convex Fujita number of a smooth projective variety and compute its value for a class of varieties with prescribed dimension $d \geq 2$ and an arbitrary projective group as fundamental group.
\end{quotation}

\DeclareRobustCommand{\SkipTocEntry}[5]{}
\setcounter{tocdepth}{1} {\scriptsize \tableofcontents}

\section{Introduction}
\label{sec:intro}

\subsection{History and Motivation}
Our aim is to study the relation between the topology of algebraic varieties and effective positivity results  for line bundles on them. The model statement we consider is Lefschetz' theorem for ample line bundles on abelian varieties: the tensor square of an ample line bundle is globally generated, the tensor cube is very ample, independently of the dimension of the underlying manifold.   
In our current work, we will look at the question how the fundamental group of an algebraic variety influences the positivity of adjoint line bundles. 

Constructing global sections of line bundles or having effective control over such is an ancient and consistently difficult problem in algebraic geometry. In the last decades, much of the effort directed towards this problem was guided by the conjectures of Fujita (for global generation and very ampleness  \cite{fujita_polarized_1987}) and Mukai (for the study of higher syzygies, see \cite[Conjecture ~4.2]{ein_syzygies_1993}). 

The main purpose of our work is to study effective global generation of convex integral linear combinations of ample divisors while being able to determine precise Fujita-type bounds in a number of cases. In particular, we succeed in constructing examples with the following prescribed parameters: dimension, fundamental group, and global generation behavior. We essentially  exhaust the range of these invariants simultaneously given the global generation behavior predicted by Fujita's freeness conjecture.

Fujita's conjectures on global generation anticipate that given a smooth projective variety $X$ equipped with an ample Cartier divisor $L$, the adjoint divisor $K_X+mL$ should be base point free provided $m\geq \dim(X)+1$.  This  is a classical consequence of Riemann--Roch if $\dim(X)=1$, and has been demonstrated for $\dim(X)\leq 5$ \cites{reider_vector_1988,ein_global_1993,kawamata_fujitas_1997,helmke_fujitas_1997,ye_fujitas_2020}. For arbitrary dimension, there exist non-trivial global generation results due to Angehrn--Siu \cite{angehrn_effective_1995}, Heier \cite{heier_effective_2002},  and Ghidelli and Lacini \cite{ghidelli_logarithmic_2021}. It is important to remember that these bounds are uniform but nevertheless not linear in $\dim(X)$.

Effective global generation behavior of divisors on a given variety has not been explicitly known except in a handful of cases. Fujita's freeness conjecture is sharp for hyperplane divisors on projective spaces $\bP^n$; on the other end of the spectrum, a classical result of Lefschetz shows that given an ample divisor  $L$ on an abelian variety $X$, the divisor $2L$ is globally generated independently of the dimension of $X$. 
Related results have been obtained by Pareschi--Popa \cite[Theorem 5.1]{pareschi_regularity_2003} on the global generation of $2K_X + 2L$ for a nef and big divisor $L$ on an irregular variety $X$ with finite Albanese morphism.  In the context of varieties $S$ with numerically trivial canonical divisors, there are results for the Hilbert scheme $\Hilb^2(S)$ of subschemes of length $2$, by Riess~\cite{ries_non-divisorial_2021} when $S$ is a K3 surface of degree $2$, 
and by K\"uronya--Mustopa \cite{kuronya_effective_2022} when $S$ is an abelian surface.

In order to be able to talk about intermediate bounds on effective global generation and treat global generation in higher codimensions, the authors of \cite{kuronya_effective_2022} introduced Fujita numbers  for coherent sheaves with a view towards the codimension of the base locus. 
It is this line of thought that we follow, except that we focus solely on the Fujita number 
describing global generation itself --- being the closest in spirit to the freeness conjecture --- while introducing the (topological) fundamental group of the variety  as an extra parameter, and concentrating on the finer variant using convex linear combinations instead of multiples of a given ample divisor. 

\subsection{Convex Fujita numbers}
As mentioned above, we will measure effective positivity of  line bundles via the global generation of the associated sequence of adjoint ones, as  suggested by Fujita's conjectures.  Throughout this paper we will work with varieties over $\bC$.

\begin{defi}
The \textbf{convex Fujita number} of a smooth projective variety $X$ is the minimal $m \geq 0$ such that  for all $s \geq m$ and any ample divisors $L_1, \ldots, L_s$ on $X$ the adjoint divisor 
\[
K_X + L_1 + \ldots + L_s
\]
is globally generated. We will denote the convex Fujita number by $\conFN(X)$, or,
if no such $m$ exists, then we set $\conFN(X) = \infty$ (but this does not occur, see \pref{prop:FNisFinite}).
\end{defi}

Fujita conjectured in \cite{fujita_polarized_1987} that for a smooth projective variety $X$ of dimension $n$ and any ample divisor $L$ on $X$ the multiple adjoint divisor $m( K_X + t L)$ is globally generated if $m + t > n+1$ and $K_X+ tL$ is nef. Fujita moreover shows in the same paper that $K_X + tL$ is nef for $t \geq n+1$. As a result, only the case $m=1$ has been traditionally referred to as  follows.

\begin{conj}[\textbf{Fujita's freeness conjecture} \cite{fujita_polarized_1987}]
\label{conj:FujitaFreeness}
Let $X$ be a smooth projective variety and let $L$ be an ample divisor on $X$. Then for all $m \geq \dim(X) + 1$ the adjoint divisor $K_X + m L$ is globally generated.
\end{conj}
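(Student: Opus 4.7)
The plan is to reduce Fujita's freeness conjecture to producing, for each point $x\in X$, a singular Hermitian metric on $L$ (equivalently an effective $\Q$-divisor numerically equivalent to a small multiple of $L$) whose associated multiplier ideal is precisely the maximal ideal $\fm_x$ in a neighbourhood of $x$, and then to apply Nadel vanishing to lift a local section. First I would fix $x\in X$ and observe that global generation of $K_X+mL$ at $x$ follows from surjectivity of the evaluation map $H^0(X,K_X+mL)\to (K_X+mL)\otimes k(x)$, which in turn follows from the vanishing of $H^1(X,\mathcal{J}(D)\otimes(K_X+mL))$ together with the identification $\mathcal{J}(D)_x=\fm_x$. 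Nadel's theorem reduces the vanishing statement to exhibiting an effective $\Q$-divisor $D\equiv \lambda L$ with $\lambda<m$ having the required isolated non-klt behaviour at $x$.

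Next I would construct such a $D$ by the Angehrn--Siu method, inducting on the dimension of the minimal log canonical centre $V_i$ passing through $x$. Starting from a divisor of large multiplicity at $x$ inside $|kL|$ for $k\gg 0$, normalised so the pair has log canonical threshold $1$ at $x$, one cuts the coefficient and increases the multiplicity, obtaining a log canonical centre of dimension at most $n-1$; using subadjunction one restricts the problem to $V_i$ and runs the argument again. Iterating $n$ times concentrates the non-klt locus at $x$. Key ingredients are Kawamata subadjunction for the restriction step, Nadel/Kawamata--Viehweg vanishing for the lifting, and Siu's numerical criterion $\dim H^0(X,kL)\geq \frac{(kL)^n}{n!}-O(k^{n-1})$ to guarantee the existence of divisors with high multiplicity at $x$.

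The hard part, and the reason the conjecture is only known through dimension five, is controlling the total coefficient $\lambda$ accumulated during the induction. At the $i$-th stage one loses a factor roughly proportional to $\dim V_i$, so the naive bookkeeping delivers only the quadratic Angehrn--Siu bound $\lambda\lesssim n(n+1)/2$ rather than the sharp linear bound $\lambda<n+1$ required by Fujita. Attaining the sharp constant would demand that each Angehrn--Siu step contribute a coefficient $1$ rather than $\dim V_i/(\dim V_i+1)$ — morally, that at every stage one can find a section of $L|_{V_i}$ (not $(\dim V_i)\cdot L|_{V_i}$) producing the next log canonical centre. This is precisely what happens for the tautological configuration of hyperplanes in $\bP^n$, and the chief obstacle is to show that no deeper singularity can occur in general, for instance via an effective extension theorem for adjoint sections from $V_i$ to $X$ that preserves the log canonical threshold, or via a Reider-type argument through Bogomolov-unstable bundles in dimensions beyond those where it currently applies.

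Finally, once $K_X+mL$ is shown to be globally generated at every closed point by the above procedure, a standard Noetherian argument together with semicontinuity of the rank of the evaluation map upgrades the pointwise statement to genuine global generation, completing the reduction. The conjecture would then follow in the sharp form stated.
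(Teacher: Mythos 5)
The statement you are asked to prove is a \emph{conjecture}, and the paper offers no proof of it: it explicitly records that Fujita's freeness conjecture is known only for $\dim(X)\leq 5$ (via Reider, Ein--Lazarsfeld, Kawamata, Ye--Zhu) and ``is currently open in dimensions six and above.'' Your proposal is an accurate survey of the Angehrn--Siu strategy --- Nadel vanishing, multiplier ideals, inductive cutting down of log canonical centres via subadjunction --- but it is not a proof, and you concede as much in your third paragraph: the bookkeeping of the accumulated coefficient $\lambda$ only delivers the quadratic bound $\lambda \lesssim \frac{1}{2}(n^2+n+2)$ (exactly the bound the paper quotes from Angehrn--Siu in Proposition~\ref{prop:FNisFinite}), not the sharp linear bound $\lambda < n+1$ that the conjecture requires. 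The step ``show that each Angehrn--Siu stage contributes coefficient $1$ rather than $\dim V_i/(\dim V_i + 1)$'' is precisely the open problem; gesturing at ``an effective extension theorem preserving the log canonical threshold'' or ``a Reider-type argument in higher dimensions'' names desiderata, not arguments.

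Concretely, the gap is that between your second and third paragraphs nothing is actually established beyond what Angehrn--Siu, Heier, and Ghidelli--Lacini already give (uniform but superlinear bounds in $n$). A submission claiming to prove the conjecture would need a genuinely new mechanism to force the non-klt centre to drop dimension at linear cost in $L$; absent that, what you have written is a correct reduction of the conjecture to another open statement. The surrounding scaffolding (evaluation map, Nadel vanishing, Noetherian/semicontinuity upgrade from pointwise to global generation) is fine, but it is the easy part.
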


Fujita's freeness conjecture follows directly from the Riemann--Roch theorem for curves; for surfaces it is a quick consequence of Reider's theorem obtained by vector bundle techniques  \cite{reider_vector_1988},  Ein and Lazarsfeld in \cite{ein_global_1993} showed it for threefolds, Kawamata in \cite{kawamata_fujitas_1997} for $4$-folds and Ye and Zhu in \cite{ye_fujitas_2020} for 5-folds. All results for $\dim(X) \geq 3$ rely on vanishing theorems and non-klt center methods. The conjecture is currently open in dimensions six and above.  

Uniform bounds for $m$ have been proven such that $K_X + m L$ is globally generated for all ample divisors $L$ on all smooth projective varieties $X$ of a given dimension $n$. Angehrn and Siu show in \cite[Theorem 0.1]{angehrn_effective_1995}  that $m \geq (n^2 + n + 2)/2$ suffices, while Heier \cite[Theorem 3.1]{heier_effective_2002} improved the bound to $m \geq (e + 1/2)n^{4/3} + 1/2 n^{2/3} + 1$, where $e = \exp(1)$ is Euler's number. More recently, Ghidelli and Lacini proved in  \cite[Theorem 1.1]{ghidelli_logarithmic_2021} an asymptotically much better bound of 
\[
m \geq \max \{n + 1, n(\log \log(n) + 2.34)\}.
\]
We will discuss  in \secref{sec:FujitaFinite} the impact on the convex Fujita number of some of the methods and results  on Fujita's freeness conjecture as recalled above.

A numerical version of \conjref{conj:FujitaFreeness} was proposed by Helmke \cite[Conjecture 1.2]{helmke_fujitas_1997} as follows.
\begin{conj}[\textbf{numerical Fujita's freeness conjecture} \cite{helmke_fujitas_1997}]
\label{conj:numericalFujitaFreeness}
Let $X$ be a smooth projective variety of dimension $\dim(X) = n$ and let $L$ be an ample divisor on $X$ such that 
\begin{enumerate}[label=(\roman*),align=left,labelindent=0pt,leftmargin=*,widest = (iii)]
\item
$(L^n) > n^n$, and 
\item
for all irreducible cycles $Z \subseteq X$ of dimension $\dim(Z) = d<n$ we have $(L^d \cdot  Z) \geq n^d $.
\end{enumerate}
Then the adjoint divisor $K_X+ L$ is globally generated.
\end{conj}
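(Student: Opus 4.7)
The plan is to apply the multiplier ideal and Nadel vanishing framework of Angehrn--Siu, Kawamata, Koll\'ar, and Helmke. Fix a point $x \in X$; to verify that $K_X + L$ is globally generated at $x$, it suffices to construct an effective $\mathbb{Q}$-divisor $\Delta \equiv_{\mathbb{Q}} \lambda L$ for some $0 < \lambda < 1$ such that $x$ is an isolated point in the cosupport of the multiplier ideal $\mathcal{J}(X, \Delta)$. Nadel vanishing applied to $H^{1}(X, \mathcal{O}_X(K_X + L) \otimes \mathcal{J}(X, \Delta))$ then produces a global section of $K_X + L$ that does not vanish at $x$.

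The first step is the singularity-production estimate. Assumption (i), namely $(L^n) > n^n$, implies via asymptotic Riemann--Roch and the counting of jet conditions at $x$ that $h^{0}(X, kL) > \binom{kn+n}{n}$ for $k \gg 0$, so there exists a divisor $D_0 \in |kL|$ with $\mathrm{mult}_{x}(D_0) > kn$. Rescaling yields an effective $\mathbb{Q}$-divisor $\Delta_0 \equiv_{\mathbb{Q}} L$ with $\mathrm{mult}_{x}(\Delta_0) > n$. One then sets $c = \mathrm{lct}_{x}(X, \Delta_0) < 1$ and examines the minimal log-canonical center $W$ of $(X, c \Delta_0)$ passing through $x$. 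If $\dim W = 0$, Nadel vanishing finishes the argument.

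The crux is the inductive descent to the minimal log-canonical center. If $d = \dim W \geq 1$, then by Kawamata's subadjunction theorem, there exists on the normalization $\widetilde W$ an effective $\mathbb{Q}$-divisor $\Theta$ with $(K_X + L)|_{\widetilde W} \sim_{\mathbb{Q}} K_{\widetilde W} + \Theta + L|_{\widetilde W}$. Hypothesis (ii), $(L^{d} \cdot W) \geq n^{d}$, is precisely the numerical input that should permit a repetition of the singularity construction on $\widetilde W$: one produces an effective divisor $\Delta_1 \equiv_{\mathbb{Q}} \mu L|_{\widetilde W}$ with $\mathrm{mult}_{x}(\Delta_1) > d \mu$ for a suitable $\mu < 1$. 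A tie-breaking procedure --- perturbing $c \Delta_0 + \varepsilon \Delta_1$ and re-extracting the log-canonical threshold --- strictly lowers the dimension of the minimal log-canonical center through $x$, after which one iterates. The final stage lifts the constructed section of the adjoint bundle from $\widetilde W$ back to $X$ via Kawamata--Viehweg vanishing and Koll\'ar injectivity combined with subadjunction.

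The main obstacle is propagating the numerical hypothesis through the induction. Condition (ii) is a statement about subvarieties of $X$, but after descending to $\widetilde W$ one needs analogous volume and multiplicity control for $L|_{\widetilde W}$ on a normal, typically singular variety, and the tie-breaking must keep the new log-canonical threshold strictly below $1$ at every stage while simultaneously respecting the minimal-center condition for each nested log-canonical center. Producing divisors on $\widetilde W$ of prescribed multiplicity at $x$ out of the intersection inequality alone --- rather than by a naive restriction of $|kL|$ --- requires Riemann--Roch and vanishing input intrinsic to $\widetilde W$, and it is precisely this inductive bookkeeping over all dimensions of log-canonical centers that remains elusive beyond low dimension.
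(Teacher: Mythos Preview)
The statement you are attempting to prove is not a theorem in the paper; it is stated as Conjecture~1.2, the \emph{numerical Fujita freeness conjecture} of Helmke, and the paper makes no attempt to prove it. It is an open problem. The paper only uses it as motivation, observing that its validity would imply $\conFN(X) \leq \dim(X) + 1$ for all smooth projective $X$, and then moves on.

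Your write-up is therefore not really a proof proposal but a sketch of the standard multiplier-ideal\,/\,log-canonical-center machinery (Angehrn--Siu, Kawamata, Helmke) together with an honest identification of why it does not close: the inductive step requires numerical control on the minimal log-canonical center $\widetilde W$, not on $X$, and hypothesis~(ii) does not automatically transfer to subvarieties of $\widetilde W$ in a way that keeps the running log-canonical threshold below $1$. That diagnosis is correct and is exactly why the conjecture remains open in dimension $\geq 6$. What you have written is a reasonable summary of the state of the art, but it should not be labeled a proof; there is nothing in the paper to compare it to.
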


The numerical Fujita's freeness conjecture implies $\conFN(X) \leq \dim(X) + 1$ for all $X$ (see
\pref{prop:FNisFinite} for the elementary argument showing a weaker but unconditional bound), and this bound in turn obviously implies Fujita's freeness conjecture. We may wonder whether 
\[
\conFN(X) \leq \dim(X) + 1
\]
always holds.  As we will see, it holds for all the examples constructed in this paper whose convex Fujita number is calculated precisely.

\subsection{Varieties with prescribed convex Fujita numbers}

\begin{defi}[Fujita simple and Fujita extreme varieties]
We say that a smooth projective  variety $X$ is 
\begin{itemize}
	\item \emph{Fujita simple} if  $\conFN(X) = 0$, and 
	\item \emph{Fujita extreme} provided $\conFN(X) \geq \dim(X) + 1$.  
\end{itemize}	

\end{defi}

\begin{ex}
For $X = \bP^n$ all ample line bundles are isomorphic to $\dO(a)$ for some $a> 0$.  It follows at once that $\conFN(\bP^n) = n+1$, so Fujita extreme varieties exist in all dimensions. 
\end{ex}

\begin{ex}
Riemann-Roch implies that all smooth projective curves $C$ have $\conFN(C) = 2$ regardless of the genus of $C$. The line bundles $\cL$ on $C$ such that $\omega_C \otimes \cL$ is not globally generated are precisely the line bundles $\cL = \dO_C(P)$ for an arbitrary point $P \in C$.
\end{ex}

\begin{rem}
The canonical bundle of a Fujita simple variety is by definition globally generated and thus nef. In particular, Fujita simple varieties are minimal.
\end{rem}

\begin{ex}
Its a classical theorem of Lefschetz that on an  abelian variety $A$ already $\cL^{\otimes 2}$ is globally generated for any ample line bundle $\cL$. The proof of Lefschetz's theorem generalizes to estimate the convex Fujita number of abelian varieties by $\conFN(A) \leq 2$.  This was proved in \cite[Theorem 1.1]{bauer_tensor_1996}. It is 
based on an application of the theorem of the square 
\[
\cL^{\otimes 2} \simeq t_{x}^\ast \cL \otimes t_{-x}^\ast \cL,
\]
Kodaira vanishing and Riemann-Roch $\rh^0(A,\cL) = \chi(A,\cL) = \frac{1}{g!} (\cL^g) > 0$
(applied to the translates $t_{x}^\ast \cL$ and $t_{-x}^\ast \cL$ instead of $\cL$), and the product map
\[
\bigoplus_{x \in A} \rH^0(A,t_{x}^\ast \cL ) \otimes \rH^0(A,t_{-x}^\ast \cL) \to  \rH^0(A,\cL^{\otimes 2}).
\]
\end{ex}

We recall that a group $\pi$ is called \textbf{projective} if it is isomorphic to the (topological) fundamental group $\pi_1(X)$ of a smooth projective variety $X$.

\begin{thmABC}[see \pref{prop:FNsimpleSurface} and \pref{prop:FNsimpleVariety}]
\label{thmABC:FNsimple}
Let $\pi$ be a projective group, and let $n \geq 2$ be an integer. There is a Fujita simple smooth connected projective variety $X$ of dimension $n$ with 
\begin{enumerate}[label=(\roman*),align=left,labelindent=0pt,leftmargin=*,widest = (iii)]
\item 
$X$ is of general type, and 
\item
$\pi_1(X)$ is isomorphic to $\pi$. 
\end{enumerate}
\end{thmABC}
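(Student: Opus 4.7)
My plan is to realize $X$ as a smooth high-degree complete intersection inside an ambient projective variety whose fundamental group is $\pi$, so that $X$ inherits $\pi$ by the Lefschetz hyperplane theorem while the canonical class is made as positive as desired by adjunction.

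\textbf{Setup and construction.} Since $\pi$ is projective, I would fix a smooth projective variety $Y$ with $\pi_1(Y) \cong \pi$. Replacing $Y$ by $Y \times W$ for a simply connected smooth projective variety $W$ (for instance $\bP^k$) if necessary, I may assume $\dim Y \geq n+1$, which preserves the fundamental group. Fix a very ample divisor $H$ on $Y$, choose positive integers $a_1, \ldots, a_r$ (with $r = \dim Y - n$) to be determined, and let $X \subset Y$ be a general smooth complete intersection of divisors $D_i \in |a_i H|$. Bertini gives that $X$ is smooth connected of dimension $n$, and the iterated Lefschetz hyperplane theorem (each intermediate section has dimension $\geq 2$, since $n \geq 2$) yields $\pi_1(X) \cong \pi_1(Y) \cong \pi$. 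By adjunction
\[
K_X \;=\; \bigl(K_Y + (a_1+\cdots+a_r)\,H\bigr)\big|_X,
\]
so for sufficiently large $a_i$ the canonical divisor $K_X$ is very ample (in particular globally generated), and $X$ is of general type.

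\textbf{Fujita simplicity.} Since the sum of ample divisors is ample, the equality $\conFN(X) = 0$ reduces to the two conditions that $K_X$ is globally generated (handled above) and that $K_X + L$ is globally generated for every ample divisor $L$ on $X$. To verify the latter I would apply an effective global generation criterion: Reider's theorem when $n=2$, and the non-klt-center methods of Angehrn--Siu, Helmke, or Ghidelli--Lacini in higher dimension. The point is that the very large positive multiple of $H|_X$ built into $K_X$ (controlled by $\sum a_i$) plays the role of a uniform buffer that, combined with any ample $L$, satisfies the numerical hypotheses required by such a criterion. When $\dim X \geq 3$ the Lefschetz theorem for Picard groups gives $\Pic(X) \cong \Pic(Y)$, so the ample classes on $X$ lie in a controlled lattice and can be handled uniformly.

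\textbf{Main obstacle.} The chief difficulty is that $\conFN(X) = 0$ quantifies over \emph{all} ample divisors $L$, including arbitrarily ``small'' ones, whereas effective base-point-freeness theorems typically require a positivity hypothesis on the divisor being added to $K_X$. Overcoming this requires transferring the positivity from $K_X$ itself, where we have complete freedom via the choice of the $a_i$, to absorb every ample $L$. Concretely, one must rule out the exceptional numerical configurations (the bad effective divisors appearing in Reider's theorem on surfaces, or the small non-klt strata in higher dimensions); these can be excluded a priori provided the $a_i$ are taken large enough compared with invariants of $Y$ and $H$, but making this quantitative and uniform over the arbitrary projective group $\pi$ is where the real work of the proof lies.
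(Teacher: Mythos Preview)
Your construction and the Lefschetz arguments for $\pi_1$ and (when $n \geq 3$) for $\Pic$ are correct, and the paper's setup is similar. The gap lies exactly where you flag it, and the ``buffer'' idea does not close it. Every effective freeness criterion you list (Reider, Angehrn--Siu, Helmke, Ghidelli--Lacini) has the shape: \emph{if the ample divisor $L$ alone satisfies lower bounds $(L^d \cdot W) \geq c_d$ for all subvarieties $W$, then $K_X + L$ is free.} There is no mechanism to transfer positivity from $K_X$ into the hypothesis on $L$: an arbitrary ample $L$ may have $(L \cdot C) = 1$ for some curve $C \subset X$ no matter how positive $K_X$ is, and then none of these criteria applies. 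Your lattice control via $\Pic(X) \cong \Pic(Y)$ only constrains intersection numbers among divisor classes; it says nothing about $(L^d \cdot W)$ for an arbitrary cycle $W$, which is what the higher-dimensional criteria demand. Nor does restriction from $Y$ force an ample class on $X$ to come from an ample (or even nef) class on $Y$, so you cannot simply push the problem back to $Y$.

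The paper closes the gap by two different devices. For $n = 2$ it takes $X$ very general in $|pH|$ on a threefold $Y$ with $p \geq 5$, uses Noether--Lefschetz (not just Lefschetz) to force $\Pic(X) = \Pic(Y)$, and then observes that \emph{every} intersection number on $X$ satisfies $(\cL_1 \cdot \cL_2)_X = p\,(\cM_1 \cdot \cM_2 \cdot H)_Y \in p\bZ$. The exceptional configurations in Reider's theorem require $(L \cdot C) - (C^2) = 1$, impossible modulo $p$, so $\conFN(X) \leq 1$; this is divisibility, not a buffer. For $n \geq 3$ the paper abandons the complete intersection and instead takes a branched cyclic cover $f : X \to Y$ of large degree $d$ along a smooth ample divisor. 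The crucial feature is that $f$ is \emph{finite} surjective, so once $f^\ast : \Pic(Y) \to \Pic(X)$ is shown to be an isomorphism (via Lefschetz applied to the ramification and branch divisors), ampleness transfers in both directions: every ample line bundle on $X$ is $f^\ast \cM$ with $\cM$ ample on $Y$. Combined with the canonical bundle formula $\omega_X = f^\ast(\omega_Y \otimes \cL^{\otimes(d-1)})$, this yields $\omega_X \otimes f^\ast\cM = f^\ast\bigl(\omega_Y \otimes \cL^{\otimes(d-1)} \otimes \cM\bigr)$, the pullback of $\omega_Y$ tensored with $d$ ample line bundles on $Y$, hence globally generated as soon as $d \geq \conFN(Y)$. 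The finiteness of $\conFN(Y)$ (Angehrn--Siu) is used as a black box; no numerical criterion is ever verified on $X$.
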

The varieties $X$ that prove \tref{thmABC:FNsimple} are constructed in two ways. The first method starts with the surface case $n=2$ (dealt with in \pref{prop:FNsimpleSurface}) and then takes the product of a Fujita simple surface and a simply connected Fujita simple variety, more precisely a hypersurface in  $\bP^{n-1}$ of degree at least $n+1$.  This covers dimensions $n \geq 4$. 

The second method covers $n \geq 3$ and constructs $X$ as  a cyclic cover $f: X \to Y$ that totally ramifies along a smooth ample divisor, such that the degree $d = \deg(f)$ is sufficiently large. The variety $Y$ is a complete intersection of the correct dimension $n$ in $S \times \bP^n$ with a smooth projective variety $S$ that has the correct fundamental group $\pi_1(S) = \pi$. Such an $S$ exists because $\pi$ is assumed to be a projective group.

More precisely, analyzing complete intersections in $\bP^N$ leads to examples for $n \geq 3$ and for $(n, k) = (2,0)$ of the following theorem. The remaining examples in dimension $n=2$ are obtained as the blow-up of $\bP^2$ in $3-k$ points.

\begin{thmABC}[see \tref{thm:FNsimplyconnected}]
\label{thmABC:FNsimplyconnected}
Let  $n \geq 2$ be an integer, and let $0 \leq k \leq n+1$. There is a simply connected smooth connected projective variety $X$ of dimension $n$ with convex Fujita number $\conFN(X) = k$. Moreover, if $k=0$, then $X$ can be chosen to be of general type.
\end{thmABC}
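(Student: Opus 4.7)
The plan is to construct $X$ explicitly in three families depending on the pair $(n,k)$:
\begin{enumerate}[label=(\roman*),align=left,labelindent=0pt,leftmargin=*]
\item a smooth hypersurface $X \subseteq \bP^{n+1}$ of degree $d = n+2-k$ when $k \geq 1$, and of degree $d = n+3$ when $k=0$, handling all $n \geq 3$;
\item a very general smooth surface $X \subseteq \bP^3$ of degree $d \geq 5$, handling $(n,k)=(2,0)$;
\item the blow-up $X = \Bl_{3-k} \bP^2$ of $\bP^2$ at $3-k$ distinct points in general position, handling $n=2$ and $k \in \{1,2,3\}$.
\end{enumerate}
Each $X$ is smooth, projective and connected by construction. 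Simple connectedness holds in (i) by the Lefschetz hyperplane theorem for $\pi_1$ (since $\dim X \geq 3$), in (ii) by its classical incarnation for smooth surfaces in $\bP^3$, and in (iii) because blow-ups of $\bP^2$ are rational.

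For families (i) and (ii) one has $\Pic(X) = \bZ \cdot \cO_X(1)$: for (i) by the Lefschetz theorem on Picard groups, and for (ii) by the Noether--Lefschetz theorem applied to a very general $X$. Hence every ample divisor takes the form $L = \cO_X(a)$ with $a \geq 1$, and adjunction gives $K_X = \cO_X(d-n-2)$. The adjoint divisor
\[
K_X + L_1 + \ldots + L_s \equ \cO_X(d - n - 2 + a_1 + \ldots + a_s)
\]
is globally generated if and only if the exponent is nonnegative, by projective normality. Since $a_1 + \ldots + a_s \geq s$ with equality attained when $L_i = \cO_X(1)$, one obtains $\conFN(X) = \max(0, n+2-d) = k$. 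The case $k=0$ corresponds to $K_X$ ample, so $X$ is automatically of general type.

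For family (iii), the case $k = 3$ is $X = \bP^2$ with $\conFN(\bP^2) = n+1 = 3$. For $k = 2$ take $X = \bF_1 = \Bl_1 \bP^2$, and for $k = 1$ take $X = \Bl_2 \bP^2$. In both cases $\Pic(X)$ has a basis consisting of the pullback $H$ of a line on $\bP^2$ and the exceptional divisors ($E$, respectively $E_1, E_2$), and $K_X = -3H + E$ respectively $K_X = -3H + E_1 + E_2$. The ample cone is generated over $\bZ$ by divisors $L = aH - \sum_j b_j E_j$ with each $b_j \geq 1$ and $a > \sum_j b_j$; from this a short bookkeeping shows that the adjoint divisor $K_X + \sum_i L_i$ decomposes as a nonnegative integral combination of the globally generated classes $H$ (pullback from $\bP^2$) and each $H - E_j$ (the fiber class of the linear projection from $p_j$) as soon as $s \geq k$, establishing $\conFN(X) \leq k$. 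The matching lower bound comes from exhibiting a deficient case: on $\Bl_2 \bP^2$ the canonical class is not effective (since $p_g = 0$), so $\conFN \geq 1$; on $\bF_1$ the ample divisor $L = 2H - E$ produces $K_X + L = -H$ with no sections, so $\conFN \geq 2$. The main technical point is family (iii), where the larger Picard rank forces one to control the ample cone explicitly and to decompose the adjoint divisor against the extremal rays of the Mori cone; every other case reduces to a single positivity condition afforded by Picard rank one.
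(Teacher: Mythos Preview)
Your proposal is correct and follows essentially the same route as the paper: hypersurfaces in $\bP^{n+1}$ for $n \geq 3$, a very general surface of degree $\geq 5$ in $\bP^3$ for $(n,k)=(2,0)$, and $\Bl_{3-k}\bP^2$ for the remaining surface cases. The only variation is in the verification for the blow-ups, where the paper invokes the toric fact that nef equals globally generated, while you instead decompose the adjoint divisor explicitly against the globally generated generators $H$ and $H-E_j$ of the nef cone; both arguments give the same bounds.
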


The behavior of convex Fujita numbers in products is mysterious in general, but easy in the following special case. It will be applied in the case of a simply connected variety $Y$.

\begin{propABC}[see \cref{cor:FNproductwithPn}]
\label{propABC:FNproductwithPn}
Let $X$ be a smooth projective variety, and let $Y$ be a smooth projective variety with $\rH^{1}(Y,\dO_{Y}) = 0$.  Then 
\[
\conFN(X \times Y) = \max\{\conFN(X), \conFN(Y)\}.
\]
\end{propABC}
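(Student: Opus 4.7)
The plan is to prove the identity by establishing the two inequalities separately. The lower bound $\conFN(X \times Y) \geq \max\{\conFN(X), \conFN(Y)\}$ does not use the hypothesis on $H^1(Y,\dO_Y)$ and follows from restriction to fibers. The upper bound uses $H^1(Y,\dO_Y) = 0$ to split $\Pic(X \times Y)$ and reduce to a K\"unneth-type global generation statement.

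For the lower bound, assume without loss of generality that $\conFN(X) \geq \conFN(Y)$, and set $s = \conFN(X) - 1$ (there is nothing to show if this is $-1$). By definition of the convex Fujita number there exist ample divisors $L_1,\ldots,L_s$ on $X$ with $K_X + L_1 + \ldots + L_s$ not globally generated. Fix any ample divisor $H$ on $Y$ and set $M_i \deq p_X^\ast L_i + p_Y^\ast H$ on $X \times Y$; these are ample as sums of ample pullbacks from the two factors. Using $K_{X \times Y} = p_X^\ast K_X + p_Y^\ast K_Y$, the divisor
\[
K_{X \times Y} + M_1 + \ldots + M_s \equ p_X^\ast(K_X + L_1 + \ldots + L_s) + p_Y^\ast(K_Y + sH)
\]
restricts on any fibre $X \times \{y_0\}$ to $K_X + L_1 + \ldots + L_s$, which is not globally generated. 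Since restriction to a closed subvariety preserves global generation, $K_{X \times Y} + \sum M_i$ is not globally generated, whence $\conFN(X \times Y) > s$.

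For the upper bound, the hypothesis $H^1(Y,\dO_Y)=0$ means that $\Pic^0(Y)$ is trivial as an abelian variety, so every divisorial correspondence between $X$ and $Y$ is trivial. By the seesaw principle (or directly the K\"unneth formula for Picard groups) one obtains a splitting $\Pic(X \times Y) = p_X^\ast \Pic(X) \oplus p_Y^\ast \Pic(Y)$. Therefore any ample divisor $M$ on $X \times Y$ can be written as $p_X^\ast A + p_Y^\ast B$, and restricting to $\{y_0\} \times Y$ and $X \times \{x_0\}$ shows that $A$ and $B$ must be ample on $X$ respectively $Y$.

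Now let $s \geq \max\{\conFN(X), \conFN(Y)\}$ and let $M_1, \ldots, M_s$ be ample divisors on $X \times Y$, decomposed as $M_i = p_X^\ast A_i + p_Y^\ast B_i$. Then
\[
K_{X \times Y} + M_1 + \ldots + M_s \equ p_X^\ast\bigl(K_X + A_1 + \ldots + A_s\bigr) + p_Y^\ast\bigl(K_Y + B_1 + \ldots + B_s\bigr),
\]
and by the choice of $s$ both $K_X + \sum A_i$ and $K_Y + \sum B_i$ are globally generated. An elementary argument (pairing a section of one factor vanishing nowhere at $x$ with a section of the other vanishing nowhere at $y$) shows that the outer tensor product of two globally generated line bundles on $X$ and $Y$ is globally generated on $X \times Y$. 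This completes the upper bound. The main obstacle is the Picard splitting in the upper bound; everything else is formal, and the splitting is precisely where the assumption $\rH^1(Y,\dO_Y) = 0$ is used.
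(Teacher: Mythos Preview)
Your proof is correct and follows essentially the same route as the paper: the lower bound via restriction to fibres (the paper's \lref{lem:FNforProduct}), and the upper bound via a Picard splitting $\Pic(X\times Y)\simeq \Pic(X)\times\Pic(Y)$ combined with the K\"unneth formula for global sections of a box product. The only difference is in how the Picard splitting is established: the paper proves it in \pref{prop:FNforProduct} under the more general hypothesis that $\Pic^0_X$ and $\Pic^0_Y$ share no isogeny factor, using the Leray spectral sequence for $\Gm$ along $\pr\colon X\times Y\to X$, and then specializes to $\Pic^0_Y=0$; you invoke the seesaw principle directly under the stronger hypothesis $\rH^1(Y,\dO_Y)=0$. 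Your route is shorter for the statement at hand, while the paper's buys the extra generality of \pref{prop:FNforProduct}.
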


The K\"unneth formula for $\pi_1$, \pref{propABC:FNproductwithPn} and the case $k=0$ of \tref{thmABC:FNsimplyconnected} reduce \tref{thmABC:FNsimple} to the case of dimension $n= 2$ and $3$. Even more follows by combining of \tref{thmABC:FNsimple} with the full version of \tref{thmABC:FNsimplyconnected}.

 Our next result indicates that the topological invariant $\pi_1(X)$ alone is not sufficient to control positivity properties of adjoint line bundles.

\begin{thmABC}[see \tref{thm:anyFNanyPi1}]
\label{thmABC:anyFNanyPi1}
Let $\pi$ be a projective group, and let $n \geq 2$ be an integer, and let $0 \leq k \leq n-1$. There is a smooth connected projective variety $X$ of dimension $n$ with the following properties:
\begin{enumerate}[label=(\roman*),align=left,labelindent=0pt,leftmargin=*,widest = (iii)]
\item
$\pi_1(X)$ is isomorphic to $\pi$, and 
\item
$\conFN(X) = k$.
\end{enumerate}
\end{thmABC}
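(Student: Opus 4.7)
The plan is to combine \tref{thmABC:FNsimple} (Fujita simple varieties with prescribed fundamental group) and \tref{thmABC:FNsimplyconnected} (simply connected varieties with prescribed convex Fujita number) through the product formula of \pref{propABC:FNproductwithPn}, taking $X$ to be an appropriate product. This strategy disposes of almost every case, leaving only two exceptional pairs $(n,k)$ that require a separate treatment.

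For $k = 0$ the statement coincides with \tref{thmABC:FNsimple}. For $k \geq 2$, I would set $X = A \times \bP^{k-1}$, where $A$ is a Fujita simple variety of dimension $n - k + 1 \geq 2$ with $\pi_1(A) \cong \pi$ supplied by \tref{thmABC:FNsimple}. Since $\bP^{k-1}$ is simply connected, satisfies $\rH^1(\bP^{k-1}, \cO) = 0$, and has $\conFN(\bP^{k-1}) = k$, \pref{propABC:FNproductwithPn} gives $\conFN(X) = k$, while the K\"unneth formula yields $\pi_1(X) \cong \pi$. For $k = 1$ with $n \geq 4$ the same construction works upon replacing $\bP^{k-1}$ by the blow-up $Y = \Bl_{p_1,p_2}(\bP^2)$: this is a simply connected rational surface with $\conFN(Y) = 1$ by the surface part of \tref{thmABC:FNsimplyconnected}, so $X = A \times Y$ with $A$ Fujita simple of dimension $n - 2 \geq 2$ meets all requirements.

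The remaining cases $(n,k) \in \{(2,1),(3,1)\}$ with $\pi$ nontrivial cannot be accessed via the product scheme, since no variety of dimension at most one has convex Fujita number equal to $1$ (curves all have $\conFN = 2$ by Riemann--Roch, a point has $\conFN = 0$). I would approach these two cases by a direct modification: pick a Fujita simple variety $S$ of dimension $n$ with $\pi_1(S) \cong \pi$ from \tref{thmABC:FNsimple} and pass to a blow-up $X = \Bl_Z(S)$ along a smooth centre $Z$ of codimension two, which preserves the fundamental group. The canonical bundle formula $K_X = f^*K_S + E$ already places the exceptional divisor $E$ inside the base locus of $K_X$, so $\conFN(X) \geq 1$. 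The main obstacle is the matching upper bound $\conFN(X) \leq 1$: one has to verify that $K_X + M$ is globally generated for every ample divisor $M$ on $X$. Writing $M = f^*L - aE$ with $a \geq 1$, the case $a = 1$ reduces instantly to the Fujita simplicity of $S$ via $K_X + M = f^*(K_S + L)$, whereas the case $a \geq 2$ demands jet-separation and Seshadri-type positivity of $K_S + L$ along $Z$; securing these estimates uniformly is where the technical heart of the argument lies, most likely calling for a careful tailoring of $S$ along the lines of the cyclic-cover construction behind \tref{thmABC:FNsimple}.
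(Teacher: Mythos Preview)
Your product reduction is correct and marginally tidier than the paper's: you pair a Fujita simple variety of dimension $n-k+1$ with $\bP^{k-1}$ (for $k\geq 2$) or with $\Bl_{p_1,p_2}(\bP^2)$ (for $k=1$, $n\geq 4$), while the paper fixes a Fujita simple \emph{surface} and multiplies by a simply connected $(n-2)$-fold of convex Fujita number $k$ supplied by \tref{thmABC:FNsimplyconnected}. Both decompositions work, and yours folds the case $(n,k)=(3,2)$ into the general pattern rather than treating it ad hoc via $S\times\bP^1$. You also correctly isolate the two residual cases $(2,1)$ and $(3,1)$; the gap lies entirely there.

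For $(n,k)=(2,1)$ the paper follows your outline---blow up a point on a tailored surface---but the mechanism that controls $a\geq 2$ is not Seshadri positivity or jet separation. One chooses $S$ to be a very general member of $|\dO(24)|$ on a threefold $Y$ with $\pi_1(Y)=\pi$; Noether--Lefschetz forces $\Pic(S)=\Pic(Y)$, so every intersection number on $S$ lies in $24\bZ$. On the blow-up $S'$ an ample $M=\sigma^*L-aE$ then has $(M^2)\equiv -a^2\pmod{24}$, hence $(M^2)\geq 8$, and Reider's theorem applies; its exceptional effective curve is excluded by the same congruence (one arrives at $1\equiv 0\pmod{12}$). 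The argument is purely arithmetic on $\NS(S')$, not analytic.

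For $(n,k)=(3,1)$ the paper does \emph{not} blow up. Your proposal would need a Reider-type criterion on threefolds adapted to the blown-up lattice, and the available threefold freeness results do not yield the arithmetic obstruction that drives the surface case; it is not clear your approach can be completed. Instead the paper takes $Y=S\times\bP^1$ with $\conFN(S)\leq 2$ and $\pi_1(S)=\pi$, and forms the double cover $f:X\to Y$ branched in a very general $B\in|(\cM\boxtimes\dO(1))^{\otimes 2}|$ for $\cM$ very ample on $S$. Noether--Lefschetz for threefolds gives $\Pic(X)=f^*\Pic(Y)$, so the branched-cover estimate of \pref{prop:FNin branched cover} yields $\conFN(X)\leq\conFN(Y)+1-2=1$; a K\"unneth computation shows $\rH^0(X,\omega_X)=0$ because the $\bP^1$-factor contributes $\rH^0(\bP^1,\dO(-1))=0$, forcing $\conFN(X)=1$. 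The fundamental group is preserved by \lref{lem:pi1 in branched cover}.
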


The surface with prescribed fundamental group and convex Fujita number $1$ is obtained as the blow up in one point of a carefully chosen surface with the same fundamental group and Fujita number $0$. The threefold with convex Fujita number $\conFN(X) = 2$ arises as $X = S \times \bP^1$ with a surface $S$ with the same fundamental group and convex Fujita number $\conFN(S) \leq 2$. The threefold $X'$ with convex Fujita number $\conFN(X') = 1$ arises as a branched double of the threefold $X$ with  $\conFN(X) \leq 2$.

\smallskip

\tref{thmABC:anyFNanyPi1} has the following obvious corollary.

\begin{corABC}
	The precise value of the Fujita number $\conFN(X)$ is not determined by the fundamental group $\pi_1(X)$ alone.
\end{corABC}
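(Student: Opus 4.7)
The plan is to derive the corollary directly from Theorem~\ref{thmABC:anyFNanyPi1}, which already provides enough flexibility to decouple $\conFN(X)$ from $\pi_1(X)$. More precisely, I would argue that to show $\conFN$ is not a function of $\pi_1$ alone, it suffices to exhibit, for some fixed projective group $\pi$, two smooth projective varieties $X_1$ and $X_2$ with $\pi_1(X_1) \cong \pi_1(X_2) \cong \pi$ but $\conFN(X_1) \neq \conFN(X_2)$.

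To produce such a pair, I would fix any projective group $\pi$ (for instance $\pi$ trivial, but any choice works) and fix the dimension $n = 2$, which is the smallest dimension where Theorem~\ref{thmABC:anyFNanyPi1} yields at least two distinct admissible values of $k$, namely $k = 0$ and $k = 1$. Applying Theorem~\ref{thmABC:anyFNanyPi1} twice, once with $k = 0$ and once with $k = 1$, produces smooth projective surfaces $X_1$ and $X_2$ with $\pi_1(X_i) \cong \pi$ and $\conFN(X_1) = 0 \neq 1 = \conFN(X_2)$. This shows that knowledge of $\pi_1(X)$ does not determine $\conFN(X)$.

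There is essentially no obstacle here beyond invoking the theorem; the only small point to be careful about is to verify that the range $0 \leq k \leq n-1$ allowed in Theorem~\ref{thmABC:anyFNanyPi1} contains at least two values for the chosen $n$, which is immediate for any $n \geq 2$. One could optionally strengthen the statement by remarking that the same construction works for every projective group $\pi$ and in every dimension $n \geq 2$, so the independence is uniform in the fundamental group.
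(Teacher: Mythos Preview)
Your proposal is correct and matches the paper's approach: the paper simply states this as an ``obvious corollary'' of \tref{thmABC:anyFNanyPi1} without further argument, and your explicit unpacking --- fixing any projective group $\pi$, taking $n=2$, and invoking the theorem for $k=0$ and $k=1$ --- is exactly the intended reasoning.
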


\subsection{Acknowledgements}
We thank the anonymous referee for a careful reading of the paper, and for suggesting a stronger version of our original \pref{prop:FNisFinite}.

The authors acknowledge support by Deutsche  Forschungsgemeinschaft  (DFG) through the Collaborative Research Centre TRR 326 "Geometry and Arithmetic of Uniformized Structures", project number 444845124.  Part of this work was done while the third author attended the workshop "Birational Complexity of Algebraic Varieties" at the Simons Center for Geometry and Physics, and he would like to thank the organizers and staff for the hospitality and stimulating atmosphere.

\section{Preliminaries on convex Fujita numbers}
\label{sec:FNprelims}

\subsection{Finiteness} 
\label{sec:FujitaFinite}

We first show that convex Fujita numbers are finite. We start with surfaces, $3$-folds and toric varieties in order to review that the typical techniques in the Fujita freeness conjecture specific to these dimensions also bound the convex Fujita number.

\begin{prop}
\label{prop:FNsurfacesReider}
Let $X$ be a smooth projective surface. 
\begin{enumerate}[align=left,labelindent=0pt,leftmargin=*]
\item 
\label{propitem:reider1}
The convex Fujita number of $X$ is bounded by $\conFN(X) \leq 3$.
\item 
\label{propitem:reider2}
If $\conFN(X) = 3$, then there exist an ample divisor $L$ on $X$ with $(L^2) = 1$.
\item 
\label{propitem:reider3}
If the intersection pairing on the N\'eron-Severi lattice $\NS(X)$ is even, then $\conFN(X) \leq 2$.
\item 
\label{propitem:reider4}
If the canonical divisor is numerically equal to $2 D$ 
with $D \in \Div(X)$, then $\conFN(X) \leq 2$.
\end{enumerate}
\end{prop}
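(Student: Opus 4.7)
My plan is to base everything on Reider's theorem, which states that for a nef line bundle $L$ on a smooth projective surface $X$ with $L^2 \geq 5$, a failure of $K_X + L$ to be globally generated at a point $x \in X$ forces the existence of an effective divisor $D \ni x$ with either $(L \cdot D, D^2) = (0, -1)$ or $(L \cdot D, D^2) = (1, 0)$. The crucial observation I will exploit throughout is that both obstructions require $L \cdot D \leq 1$, which is incompatible with writing $L$ as a sum of two or more ample classes, since each ample summand contributes at least $1$ to the intersection with any effective divisor.

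For \ref{propitem:reider1} I will first reduce to $s = 3$ by replacing $L_3, \ldots, L_s$ by their ample sum. Setting $L = L_1 + L_2 + L_3$, positivity of intersection products of ample classes (integer-valued and positive by Nakai) gives $L^2 \geq 9 \geq 5$ and $L \cdot D \geq 3$ for every effective $D$, so Reider produces no obstruction and $K_X + L$ is globally generated. The contrapositive for \ref{propitem:reider2} is similar: if $\conFN(X) \geq 3$, I will pick ample $L_1, L_2$ with $K_X + L_1 + L_2$ not globally generated, set $L = L_1 + L_2$, and note that $L \cdot D \geq 2$ still rules out every Reider obstruction. Therefore Reider's hypothesis $L^2 \geq 5$ must fail; combining this with the easy bound $L^2 = L_1^2 + 2 L_1 \cdot L_2 + L_2^2 \geq 4$ forces $L^2 = 4$ and then $L_1^2 = L_2^2 = L_1 \cdot L_2 = 1$, producing an ample class with self-intersection $1$.

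For \ref{propitem:reider3}, evenness of the intersection form gives $L_i^2 \geq 2$ for each ample $L_i$, so $(L_1 + L_2)^2 \geq 6 \geq 5$; Reider then applies and the ampleness argument from \ref{propitem:reider2} rules out every Reider obstruction. To deduce \ref{propitem:reider4} I will bridge to \ref{propitem:reider3} via Riemann--Roch on surfaces: the identity $\chi(L) - \chi(\cO_X) = \tfrac{1}{2} L \cdot (L - K_X)$ yields $L^2 \equiv L \cdot K_X \pmod{2}$ for every divisor class $L$; under the hypothesis $K_X \equiv 2D$ the right-hand side is $2 L \cdot D$ and hence even, so $L^2$ is always even, $\NS(X)$ is an even lattice, and \ref{propitem:reider3} applies. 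The only non-routine step is the sharp edge-case analysis in \ref{propitem:reider2}, where Reider fails to apply directly because $L^2 = 4 < 5$ and one must extract the numerical information on $L_1, L_2$ by hand; the Riemann--Roch parity trick linking divisibility of $K_X$ to evenness of the intersection form is the second non-trivial ingredient.
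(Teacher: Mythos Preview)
Your proof is correct and follows essentially the same approach as the paper: both rely on Reider's theorem together with the observation that a sum of $m$ ample divisors intersects every effective curve in at least $m$, ruling out Reider's exceptional configurations. The only cosmetic difference is organizational---the paper deduces \ref{propitem:reider3} directly from \ref{propitem:reider1} and \ref{propitem:reider2} (an even lattice cannot carry an ample class of self-intersection $1$), whereas you reprove \ref{propitem:reider3} by running Reider again with the improved bound $(L_1+L_2)^2 \geq 6$; both routes are equivalent, and your Riemann--Roch parity argument linking \ref{propitem:reider4} to \ref{propitem:reider3} is exactly what the paper has in mind.
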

\begin{proof}
By the Riemann-Roch formula, the assumption in \eqref{propitem:reider4} implies the assumption in \eqref{propitem:reider3} and clearly thus both then follow from \eqref{propitem:reider1} and \eqref{propitem:reider2}. 

Let us prove \eqref{propitem:reider1}. We have to show a bound on $m$ such that for ample divisor $L_1, \ldots, L_m$ the adjoint divisor $K_X + L$ with $L = L_1 + \ldots + L_m$ is globally generated. If $m \geq 3$, then $(L^2) \geq 9$ and by \cite[Theorem 1 (i)]{reider_vector_1988} base points can only occur if there is an effective divisor $C$ such that $(C^2) \leq 0$ and $(L \cdot C) - (C^2) = 1$. But $(L \cdot C) \geq m$ since the $L_i$ are ample, contradiction. 

For \eqref{propitem:reider2} we must have a base point of $K_X + L$ when $m=2$. Reider's method as in the proof of \eqref{propitem:reider1} still works unless $(L^2) \leq 4$, which implies $(L_i \cdot L_j) = 1$ for all $i, j$. This shows \eqref{propitem:reider2}.
\end{proof}

The following consequence of Reider's method will be used in our proof of \tref{thmABC:FNsimple}.

\begin{prop}
\label{prop:FNsurfacesReider divisible intersection numbers}
Let $X$ be a smooth projective surface. 
If the intersection pairing on the N\'eron-Severi lattice $\NS(X)$  takes values in $d\bZ$ for some $d \geq 5$, then the convex Fujita number of $X$ is bounded by $\conFN(X) \leq 1$.
\end{prop}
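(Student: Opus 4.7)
The plan is to invoke Reider's theorem in the same spirit as in the proof of \pref{prop:FNsurfacesReider}, but to exploit the stronger divisibility hypothesis on the intersection form to rule out Reider's obstructing divisors completely already at $s=1$. Since a sum of ample divisors is ample, it suffices to show that for every ample divisor $L$ on $X$ the adjoint divisor $K_X + L$ is globally generated.

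Fix such an $L$. Because $L$ is ample and $(L^2) \in d\bZ$ with $d \geq 5$, we have $(L^2) \geq d \geq 5$, so Reider's theorem applies. Assume for contradiction that $K_X + L$ fails to be globally generated at some point $x \in X$. Reider's theorem then produces an effective divisor $D$ passing through $x$ whose numerical invariants satisfy either
\[
(L \cdot D) \equ 0 \text{ and } (D^2) \equ -1, \quad \text{or} \quad (L \cdot D) \equ 1 \text{ and } (D^2) \equ 0.
\]
In both cases $(L \cdot D) \in \{0, 1\}$. However, $D$ is effective and nonzero, hence, since $L$ is ample, its class in $\NS(X)$ is nonzero and we have $(L \cdot D) > 0$. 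The assumption on the intersection pairing now forces $(L \cdot D) \in d\bZ_{>0}$, i.e.\ $(L \cdot D) \geq d \geq 5$, contradicting $(L \cdot D) \leq 1$. Therefore no such $D$ exists, $K_X + L$ is globally generated, and $\conFN(X) \leq 1$.

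The only potential pitfall is the usual one when invoking Reider: we need $(L^2) \geq 5$ so that the theorem produces the dichotomy above (as opposed to a weaker one when $(L^2)$ is small). This is immediate from the divisibility hypothesis, since $L$ ample implies $(L^2) > 0$ and hence $(L^2) \geq d \geq 5$. No further subtlety arises, and the argument does not need to inspect $(D^2)$ at all — the divisibility of $(L \cdot D)$ alone is enough to reach a contradiction.
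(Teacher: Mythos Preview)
Your proof is correct and follows essentially the same approach as the paper: apply Reider's theorem using $(L^2) \geq d \geq 5$, then use the divisibility hypothesis to rule out the exceptional effective divisor. The only cosmetic difference is that the paper phrases Reider's exceptional case as $(C^2) \leq 0$ with $(L \cdot C) - (C^2) = 1$ and derives the contradiction from $d \mid (L \cdot C) - (C^2)$, whereas you split into the two cases and contradict $d \mid (L \cdot D)$ alone; both are immediate from the hypothesis.
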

\begin{proof}
We must show that for any ample divisor $L$ on $X$ the adjoint divisor $K_X + L$ is globally generated. Since by assumption $(L^2) \geq d \geq 5$, we can apply \cite[Theorem 1 (i)]{reider_vector_1988} and must exclude the exceptional case: there is an effective divisor $C$ such that $(C^2) \leq 0$ and $(L \cdot  C) - (C^2) = 1$. But by assumption $d$ divides $(L \cdot  C) - (C^2)$, so we are done.
\end{proof}

\begin{prop}
Let $X$ be a smooth projective threefold. Then the convex Fujita number is bounded by $\conFN(X) \leq 4$.
\end{prop}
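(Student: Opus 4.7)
The plan is to reduce to the case of a sum of exactly four ample divisors, and then to carry out the multiplier-ideal / non-klt center argument of Ein--Lazarsfeld \cite{ein_global_1993} with $L$ replaced by this sum.

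First I would observe that if $L_1, \ldots, L_s$ are ample divisors on $X$ with $s \geq 4$, then
\[
L_1 + \cdots + L_s \equ L_1 + L_2 + L_3 + (L_4 + \cdots + L_s),
\]
and the fourth summand is again ample. It therefore suffices to prove that $K_X + L_1 + L_2 + L_3 + L_4$ is globally generated for any four ample divisors on $X$.

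Second, writing $L := L_1 + L_2 + L_3 + L_4$, I would check the numerical positivity that the Ein--Lazarsfeld argument requires of $L$, namely the hypotheses of \conjref{conj:numericalFujitaFreeness} in dimension $n=3$. Since the restriction of an ample Cartier divisor to an irreducible closed subvariety is ample, and the intersection of ample classes on a projective variety is a positive integer, expanding multinomially one obtains $(L \cdot C) \geq 4$ for every irreducible curve $C \subset X$, $(L^2 \cdot S) \geq 16$ for every irreducible surface $S \subset X$, and $(L^3) \geq 64$, comfortably exceeding the thresholds $3$, $9$, and $27$.

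The main obstacle is then the translation of these numerical inequalities into the actual global generation of $K_X + L$. I would do this by following the Ein--Lazarsfeld proof of Fujita's freeness on threefolds: fix a point $x \in X$, use Riemann--Roch together with $(L^3) > 27$ to produce a $\mathbb{Q}$-divisor $D \sim_{\mathbb{Q}} cL$ with $c < 1$ and sufficiently high multiplicity at $x$; then run the standard non-klt-center induction combined with Nadel vanishing, in which the cut-down to lower-dimensional subvarieties is controlled precisely by the inequalities $(L^2 \cdot S) \geq 9$ and $(L \cdot C) \geq 3$. The key point is that this argument never uses that $L$ is a multiple of a single ample divisor, only its numerical positivity, so it applies verbatim to our $L = L_1 + L_2 + L_3 + L_4$ and yields a section of $K_X + L$ not vanishing at $x$.
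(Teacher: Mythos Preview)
Your proposal is correct and follows essentially the same route as the paper: verify that a sum $L = L_1 + \cdots + L_m$ of $m \geq 4$ ample divisors satisfies the numerical thresholds $(L^3) > 27$, $(L^2 \cdot S) \geq 9$, $(L \cdot C) \geq 3$, and then invoke the numerical Fujita freeness result for threefolds. The paper simply cites \cite[Theorem~5.2]{helmke_fujitas_1997} as a black box rather than re-running the Ein--Lazarsfeld multiplier-ideal argument, and it works directly with arbitrary $m \geq 4$ instead of first reducing to $m = 4$, but these are cosmetic differences.
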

\begin{proof}
Let $\cL$ be a sum $L_1 + \ldots + L_m$ of $m \geq 4$ ample divisors $L_i$ on $X$. Then for all $i, j, k$ and all surfaces $S$ and all curves $C$ in $X$ we have
\[
(L_i \cdot L_j \cdot L_k) \geq 1, \quad (L_i \cdot L_j \cdot S) \geq 1, \quad (L_i \cdot C) \geq 1,
\]
so that 
\[
(L^3) \geq m^3 > 27, \quad (L^2 \cdot S) \geq m^2\geq 9, \quad (L \cdot C) \geq m \geq 3.
\]
The estimate $\conFN(X) \leq 4$ now follows from \cite[Theorem 5.2]{helmke_fujitas_1997}.
\end{proof}

Fujita's freeness conjecture has been proved for toric varieties in  \cite[Theorem 0.3]{laterveer_linear_1996}. The bound for the convex Fujita number of toric varieties has also been established.

\begin{prop}
For a toric variety $X$ the convex Fujita number is bounded by 
\[
\conFN(X) \leq \dim(X) + 1.
\]
Equality holds only for $X = \bP^n$.
\end{prop}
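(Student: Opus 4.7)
The plan is to combine Fujita's freeness conjecture for smooth projective toric varieties with the convexity of the nef cone.

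For the upper bound, recall that Fujita's freeness conjecture is known for smooth projective toric varieties (\cite{laterveer_linear_1996}), so $K_X + (n+1)L$ is globally generated for every ample divisor $L$ on $X$. On a smooth projective toric variety, nefness and global generation coincide, so each $K_X + (n+1)L_i$ is nef. The elementary identity
\[
K_X + L_1 + \cdots + L_{n+1} \,=\, \frac{1}{n+1} \sum_{i=1}^{n+1} \bigl(K_X + (n+1) L_i \bigr)
\]
exhibits $K_X + L_1 + \cdots + L_{n+1}$ as a $\bQ$-convex combination of nef $\bQ$-Cartier divisors; since the nef cone is convex, the combination is nef, and by the toric equivalence it is globally generated. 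For $s > n+1$, one writes $K_X + L_1 + \cdots + L_s$ as the sum of the preceding nef divisor and the remaining ample divisors $L_{n+2}, \ldots, L_s$, and uses that nefness is preserved by sums. This yields $\conFN(X) \leq n+1$.

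For the equality statement, $\conFN(\bP^n) \geq n+1$ is immediate by taking $L_1 = \cdots = L_n = H$ and noting that $K_{\bP^n} + nH = -H$ is not globally generated. Conversely, assuming $X \neq \bP^n$, I would invoke the sharpened form of Fujita's freeness for smooth projective toric varieties saying that $K_X + nL$ is already globally generated for every ample $L$ in this case. Plugging this into the averaging argument above with $n$ in place of $n+1$ gives $\conFN(X) \leq n$, completing the characterisation of equality.

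The main obstacle is precisely the sharpening used in the equality part: while Laterveer's theorem provides the uniform bound $n+1$, showing that it drops to $n$ as soon as $X \neq \bP^n$ requires a finer combinatorial analysis of the fan (identifying the primitive collections that single out $\bP^n$ among smooth projective toric varieties). This refinement is however well-known in the toric literature and does not rely on any new ingredient beyond the convexity-of-support-function viewpoint already invoked above.
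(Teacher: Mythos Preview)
Your argument is correct, but it differs from the paper's one-line proof. The paper simply invokes \cite[Theorem~0.3]{mustata_vanishing_2002}, which is already stated in the convex form (a sum of distinct ample divisors rather than a multiple of a single one) and already contains the characterisation of equality; combined with the formula $\omega_X = \dO_X(-\sum_i D_i)$ for the torus-invariant prime divisors $D_i$, both the bound and the equality case drop out immediately.

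Your route is a genuine alternative: you reduce the convex statement to the classical single-$L$ Fujita freeness via the averaging identity $K_X + \sum L_i = \tfrac{1}{n+1}\sum (K_X + (n+1)L_i)$, and then exploit the toric coincidence of nefness and global generation to pass from a nef $\bQ$-convex combination back to global generation of the integral divisor. This is a clean observation and makes transparent that on any variety where nef Cartier divisors are globally generated, the convex Fujita bound follows formally from the ordinary one. The cost is that for the equality case you must separately invoke the refinement that $K_X + nL$ is already nef for every ample $L$ when $X \neq \bP^n$; this is indeed available in the toric literature (and also follows from general adjunction theory via the length bound on extremal rays), but it is an extra input, whereas the paper gets it for free from Musta\c{t}\u{a}'s theorem.
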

\begin{proof}
In view of $\omega_X = \dO_X(-\sum_i D_i)$ with $D_i$ being the torus invariant prime divisors, the result follows at once from \cite[Theorem 0.3]{mustata_vanishing_2002}\footnote{Musta\c{t}\u{a} describes this theorem as a strong version of Fujita's freeness conjecture. The notion of convex Fujita number provides a conceptual framework for this kind of strengthening of Fujita's conjecture. The result can also be extracted from \cite{laterveer_linear_1996}, but the chosen reference is more direct.}. 
\end{proof}

Results leading to general bounds on Fujita numbers tend to deliver explicit statements on convex Fujita numbers as well; in particular, they imply that $\conFN(X)$ is always finite. The work of Angehrn--Siu yields the bound 
$\conFN(X) \leq  \frac{1}{2}(n^2 + n + 2)$ which is quadratic in the dimension $n = \dim(X)$. Here we establish the appropriate version of the theorem of Ghidelli--Lacini \cite{ghidelli_logarithmic_2021}.

\begin{prop}
\label{prop:FNisFinite}
For a smooth projective variety of dimension $n$ the convex Fujita number is bounded as
\[
\conFN(X) \leq 1+ n( \log(\log n) + 2.34)\ .
\]
More precisely, we even have $\conFN(X) \leq n+1$ for $n \leq 4$.
\end{prop}
\begin{proof}
Let $L_1,\dots,L_m$ be ample integral divisors on $X$, and let $P\in X$ be an arbitrary point. Then the $\Q$-divisor 
\[
D \deq \frac{L_1+\ldots+L_m}{m}
\]
and the set $S=\{P\}$  satisfy the conditions of \cite[Theorem 5.1]{ghidelli_logarithmic_2021}. Consequently, for any given $0<\eps\ll 1$, there exists a positive rational number $t$ and  an effective $\Q$-divisor $\Delta\in |tD|_\Q$ such that\footnote{We refer to the notation in \cite{ghidelli_logarithmic_2021}. The relevant notation $F(n,r)$ is defined right before Theorem 4.1 in \emph{loc.~cit.}}
\begin{enumerate}
	\item $t<F(n,1)+\epsilon$,
	\item $(X,\Delta)$ is log canonical but not klt at $P$,
	\item $\text{LLC}(X,\Delta;P)=\{P\}$,
	\item $P\in\text{Nklt}(X,\Delta)$.
\end{enumerate}

The base-point freeness argument in the proof of \cite[Theorem 5.2]{ghidelli_logarithmic_2021} goes through verbatim for $\Delta$ as above, hence, it implies that $K_X+L_1+\ldots+L_m$ is globally generated for $m > F(n,1)$. This shows $\conFN(X) \leq F(n,1) + 1$. Upon combining this statement  with \cite[Theorem 4.9]{ghidelli_logarithmic_2021}, we obtain that 
\[
\conFN(X) \dleq F(n,1) + 1  \dleq 1+ n(\log(\log n)+2.34)\ ,
\]
as promised. The more precise bound for $n \leq 4$ follows from $\conFN(X) \leq \lfloor F(n,1) \rfloor + 1$, and the data in the table on page 12 of  \cite{ghidelli_logarithmic_2021}.
\end{proof}

\subsection{Fujita numbers of products}
Let $X$ and $Y$ be smooth projective varieties. For ample line bundles $\cL$ on $X$ and $\cM$ on $Y$ the line bundle $\cL \boxtimes \cM$ on $X \times Y$ is ample. Let $\cL_i$ (resp.\ $\cM_i$) be ample line bundles on $X$ (resp.\ on  $Y$) for $i = 1, \ldots, m$.  If the line bundle
\begin{equation}
\label{eq:kuenneth}
\omega_{X \times Y} \otimes \bigotimes_{i=1}^m (\cL_i \boxtimes \cM_i) = (\omega_X \otimes \bigotimes_{i=1}^m \cL_i ) \boxtimes (\omega_Y \otimes \bigotimes_{i=1}^m \cM_i) 
\end{equation}
is globally generated, then also its restriction $\omega_X \otimes \bigotimes_{i=1}^m \cL_i$  (resp.\ $\omega_Y \otimes \bigotimes_{i=1}^m \cM_i$) to the fiber of the projection $X \times Y \to X$ (resp.\ $X \times Y \to Y$) is globally generated. This immediately shows the following lemma.

\begin{lem} 
\label{lem:FNforProduct}
Let $X$ and $Y$ be smooth projective varieties. Then 
\[
\conFN(X \times Y) \geq \max\{\conFN(X), \conFN(Y) \}. 
\]
\end{lem}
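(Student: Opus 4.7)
The plan is to exploit the Künneth-type identity \eqref{eq:kuenneth} stated in the paragraph above the lemma, together with two elementary principles: (i) global generation is preserved under restriction to a closed subvariety, and (ii) global generation is insensitive to tensoring with a $1$-dimensional vector space (viewed as a trivial line bundle with a chosen trivialization). The identity \eqref{eq:kuenneth} already decomposes the adjoint bundle on $X \times Y$ into an external product, so the desired inequality will fall out by restriction to a fiber of either projection.

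Concretely, I would fix an integer $m \geq \conFN(X \times Y)$ and verify $m \geq \conFN(X)$; the inequality $m \geq \conFN(Y)$ then follows by the symmetric argument. Given arbitrary ample line bundles $\cL_1, \ldots, \cL_m$ on $X$, pick an auxiliary ample line bundle $\cM$ on $Y$ (which exists because $Y$ is projective). The line bundles $\cL_i \boxtimes \cM$ are ample on $X \times Y$, so by the hypothesis $m \geq \conFN(X \times Y)$ the adjoint line bundle
\[
\omega_{X \times Y} \otimes \bigotimes_{i=1}^m (\cL_i \boxtimes \cM)
\]
is globally generated. I would then fix a closed point $y_0 \in Y$ and restrict this bundle to $X \times \{y_0\} \cong X$. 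By \eqref{eq:kuenneth}, the restriction equals $\omega_X \otimes \bigotimes_i \cL_i$ tensored with the $1$-dimensional vector space $(\omega_Y \otimes \cM^{\otimes m})|_{y_0}$, and is therefore globally generated on $X$. This establishes $m \geq \conFN(X)$.

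I anticipate essentially no substantive obstacle: the entire argument is a direct consequence of the Künneth decomposition already exhibited above the lemma, together with the standard fact that restricting a globally generated sheaf to a closed subscheme preserves global generation. The preceding paragraph in fact records the decisive observation, and the lemma is merely its cleaner formal reformulation after taking $\max$ over both projections.
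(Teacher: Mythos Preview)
Your proposal is correct and follows essentially the same approach as the paper: the paper's proof is precisely the paragraph preceding the lemma (the K\"unneth decomposition \eqref{eq:kuenneth} together with restriction to a fiber of one of the projections), concluding with ``This immediately shows the following lemma.'' Your only cosmetic addition is making explicit the choice of an auxiliary ample bundle $\cM$ on $Y$ and the fixed point $y_0$, which the paper leaves implicit.
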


The estimate can be improved to an equality in some favorable cases. Recall that the isogeny category of abelian varieties is a semisimple abelian category (by Poincar\'e's complete reducibility theorem). By an  isogeny factor of an abelian variety $A$ we mean a direct summand of $A$ in the isogeny category.

\begin{prop}
\label{prop:FNforProduct}
Let $X$ and $Y$ be smooth projective varieties such that the abelian varieties $\Pic^0_X$ and $\Pic^0_Y$ have no common nontrivial isogeny factor. Then  the following holds.
\begin{enumerate}[align=left,labelindent=0pt,leftmargin=*]
\item 
\label{propitem:FNforProduct1}
$\Pic(X \times Y) = \Pic(X) \times \Pic(Y)$, and 
\item
\label{propitem:FNforProduct2}
$\conFN(X \times Y) = \max\{\conFN(X), \conFN(Y) \}.$
\end{enumerate}
\end{prop}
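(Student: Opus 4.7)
The plan is to first establish part~(\ref{propitem:FNforProduct1}), i.e.\ the splitting of the Picard group, and then derive the Fujita number equality in (\ref{propitem:FNforProduct2}) essentially formally from it, combined with the lower bound from \lref{lem:FNforProduct}.

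For (\ref{propitem:FNforProduct1}), I would invoke the standard Künneth-type decomposition
\[
\Pic(X \times Y) \;\cong\; \Pic(X) \oplus \Pic(Y) \oplus \Hom(\Alb(X), \Pic^0_Y),
\]
valid for smooth projective varieties (the last summand parametrizes the line bundles whose restriction to each fibre of both projections is trivial, via the universal property of the Albanese). Now comes the point where the hypothesis is used: $\Alb(X)$ is the dual abelian variety of $\Pic^0_X$, and passing to the dual preserves the collection of simple isogeny factors. Hence $\Alb(X)$ and $\Pic^0_Y$ share no nontrivial simple isogeny factor either, so by Poincaré's complete reducibility theorem $\Hom(\Alb(X), \Pic^0_Y) = 0$. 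The decomposition then collapses to $\Pic(X \times Y) = \Pic(X) \times \Pic(Y)$, meaning every line bundle on $X \times Y$ is of the form $\cL \boxtimes \cM$.

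For (\ref{propitem:FNforProduct2}), set $m := \max\{\conFN(X), \conFN(Y)\}$. The inequality $\conFN(X \times Y) \geq m$ is already \lref{lem:FNforProduct}. For the reverse, let $s \geq m$ and let $\cN_1, \ldots, \cN_s$ be ample line bundles on $X \times Y$. By part~(\ref{propitem:FNforProduct1}) we write $\cN_i = \cL_i \boxtimes \cM_i$, and restricting to fibres $X \times \{y\}$ and $\{x\} \times Y$ shows that each $\cL_i$ and each $\cM_i$ is ample on the respective factor, since the restriction of an ample divisor to a closed subvariety stays ample. Since $s \geq \conFN(X)$ and $s \geq \conFN(Y)$, both $\omega_X \otimes \bigotimes_i \cL_i$ and $\omega_Y \otimes \bigotimes_i \cM_i$ are globally generated on their respective factors. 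Their external tensor product is then globally generated on $X \times Y$, and by the identity \eqref{eq:kuenneth} it equals $\omega_{X \times Y} \otimes \bigotimes_i \cN_i$. This yields $\conFN(X \times Y) \leq s$, completing the proof. The only genuine input is the Picard decomposition in step one; once available, the Fujita number statement is a routine unwinding of definitions together with the fact that ampleness and global generation are preserved under external tensor products in both directions.
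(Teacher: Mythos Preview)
Your proof is correct and follows essentially the same route as the paper: both establish the Picard splitting via the identification of the ``correction term'' with $\Hom(\Alb_X,\Pic^0_Y)$ (the paper derives this from the Leray spectral sequence for $\Gm$, you cite the decomposition as standard), and both deduce part~(\ref{propitem:FNforProduct2}) by combining \lref{lem:FNforProduct} with the observation that ample line bundles decompose as exterior products of ample factors. One small slip: your final line should read $\conFN(X\times Y)\leq m$ rather than $\leq s$, since $s$ was an arbitrary integer $\geq m$.
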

\begin{proof}
We first show that \eqref{propitem:FNforProduct1} implies \eqref{propitem:FNforProduct2}. The product decomposition for $\Pic(-)$ is natural by restriction to fibers, hence also the nef cones and its interior, the ample cone, are products of the respective cones of the factors. Now arguing as in the proof of \lref{lem:FNforProduct}, it remains to see that $\omega_{X \times Y} \otimes \bigotimes_{i=1}^m (\cL_i \boxtimes \cM_i)$ is globally generated if the restrictions to the fibers of both projections are globally generated. This follows at once by the K\"unneth formula applied to \eqref{eq:kuenneth}.

For \eqref{propitem:FNforProduct1} we analyze the Leray spectral sequence for $\Gm$ along the projection $\pr: X \times Y \to X$. The low degree terms yield an exact sequence
\[
0 \to \Pic(X) \xrightarrow{\pr^\ast} \Pic(X \times Y) \to \rH^0(X,\R^1\pr_\ast \Gm) \xrightarrow{d_2^{0,1}} \rH^2(X,\Gm)  \xrightarrow{\pr^\ast} \rH^2(X \times Y, \Gm).
\]
Since the choice of a point $y \in Y$ and the map $i(x) = (x, y)$ splits the projection, the map $\pr^*$ admits a retraction. Hence the boundary map $d_2^{0,1}$ is the zero map and we have a short exact sequence
\[
0 \to \Pic(X) \xrightarrow{\pr^\ast} \Pic(X \times Y) \to \rH^0(X,\R^1\pr_\ast \Gm)  \to 0.
\]
The sheaf $\R^1\pr_\ast \Gm$ is represented by the Picard variety of $Y$, hence 
\[
\rH^0(X,\R^1\pr_\ast \Gm) = \Hom(X,\Pic_Y).
\]
Now we fix a point $x \in X$. Evaluation in $x$ and constant maps, as well as the Albanese property describe a canonical splitting
\[
\Hom(X,\Pic_Y) = \Pic(Y) \times \Hom((X, x), (\Pic_X,0)) = \Pic(Y) \times \Hom(\Alb_X, \Pic^0_Y).
\]
Because Albanese and $\Pic^0$ are dual abelian varieties and thus share the same isogeny factors, our assumption is precisely that the second factor vanishes. This proves \eqref{propitem:FNforProduct1}.
\end{proof}

We now prove \pref{propABC:FNproductwithPn} from the introduction.

\begin{cor}
\label{cor:FNproductwithPn}
Let $X$ be a smooth projective variety, and let $Y$ be a smooth projective variety with $\rH^{1}(Y,\dO_{Y}) = 0$.  
Then 
\[
\conFN(X \times Y) = \max\{\conFN(X), \conFN(Y)\}.
\]
\end{cor}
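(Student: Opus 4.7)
The plan is to deduce this directly from \pref{prop:FNforProduct}. The hypothesis $\rH^{1}(Y,\dO_{Y}) = 0$ is exactly what one needs in order to apply that proposition, because this cohomology group is the tangent space at the origin of the Picard scheme $\Pic_Y$. Thus $\Pic^0_Y$ is a connected group scheme of dimension $0$, hence trivial. Being the zero abelian variety, $\Pic^0_Y$ therefore shares no nontrivial isogeny factor with \emph{any} abelian variety, in particular not with $\Pic^0_X$.

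With this observation, the hypotheses of \pref{prop:FNforProduct} are satisfied, and its conclusion gives
\[
\conFN(X \times Y) \equ \max\{\conFN(X), \conFN(Y)\}
\]
directly.

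I expect no real obstacle here: the only content is checking the vanishing of $\Pic^0_Y$, which is standard deformation theory of the Picard scheme (or equivalently, the identification of the tangent space of $\Pic_Y$ at the identity with $\rH^1(Y,\dO_Y)$ via the exponential sequence).
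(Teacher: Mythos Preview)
Your argument is correct and matches the paper's proof exactly: both observe that $\rH^1(Y,\dO_Y)=0$ forces $\Pic^0_Y$ to be trivial, whence the isogeny-factor hypothesis of \pref{prop:FNforProduct} is vacuously satisfied and the conclusion follows.
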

\begin{proof}
Since $\rH^{1}(Y,\dO_{Y}) = 0$, we have that $\Pic_Y^0$ is trivial, so \pref{prop:FNforProduct} proves the claim.
\end{proof}

\section{Simply connected varieties}
\label{sec:FujitaSimplyConnected}

\subsection{Complete intersections} 
In this section, let $X$ be an $n$-dimensional smooth complete intersection in $\bP^{n+r}$ of hypersurfaces  of degree $d_i$ for $i=1, \ldots, r$.

\begin{prop}
\label{prop:FNcompleteintersection}
If $n \geq 3$, then the complete intersection $X \inj \bP^{n+r}$ of multidegree $(d_1, \ldots, d_r)$ has convex Fujita number
\[
\conFN(X) = \max\big\{0,  (n+r+1) - \sum_{i=1}^r  d_i \big\}.
\]
\end{prop}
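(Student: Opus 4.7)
The plan is to reduce the question to a purely numerical one by exploiting the rigidity of $\Pic(X)$ for a smooth complete intersection of dimension at least three. The Lefschetz hyperplane theorem (applied inductively and combined with the exponential sequence together with the vanishing of $\rH^1(X,\dO_X)$) gives $\Pic(X) = \bZ \cdot \dO_X(1)$, and the adjunction formula yields $K_X = \dO_X(\sum_i d_i - (n+r+1))$. Both key objects thereby become twists of the hyperplane bundle.

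Writing every ample line bundle on $X$ as $\dO_X(a)$ with $a \geq 1$, an ample sum $L_1 + \ldots + L_s$ takes the form $\dO_X(t)$ with $t \geq s$, where the minimum $t = s$ is realized by choosing each $L_i = \dO_X(1)$. The next step is to record the elementary fact that $\dO_X(m)$ is globally generated if and only if $m \geq 0$: for non-negative $m$ one restricts the monomial basis from $\bP^{n+r}$, while for $m < 0$ the divisor of any putative nonzero section would be effective but have negative intersection against $\dO_X(1)^{n-1}$, which is impossible.

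Combining these ingredients, $K_X + L_1 + \ldots + L_s = \dO_X(t + \sum_i d_i - (n+r+1))$ is globally generated precisely when $t \geq (n+r+1) - \sum_i d_i$. If this right-hand side is non-positive, the condition holds automatically for every $s \geq 0$ and one concludes $\conFN(X) = 0$. Otherwise, the extremal example $L_1 = \ldots = L_s = \dO_X(1)$ forces $s \geq (n+r+1) - \sum_i d_i$; at one step below this threshold the bundle $\dO_X(-1)$ arises, which is not globally generated, pinning $\conFN(X)$ exactly at $(n+r+1) - \sum_i d_i$. The two cases combine to the stated formula.

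The only genuine input is the computation $\Pic(X) = \bZ \cdot \dO_X(1)$, and it is this that makes the hypothesis $n \geq 3$ essential: in dimension two the Picard group can be much larger (e.g., for K3 or Enriques surfaces arising as complete intersections), which blocks the same clean reduction and explains why surfaces require separate treatment elsewhere in the paper.
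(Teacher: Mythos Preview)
Your proof is correct and follows essentially the same route as the paper's: reduce to the cyclic Picard group via Lefschetz, identify $\omega_X$ by adjunction, and observe that the extremal case $L_i = \dO_X(1)$ determines the threshold. The only cosmetic difference is that the paper cites the Grothendieck--Lefschetz theorem for $\Pic$ directly (SGA~2, Exp.~XII, Cor.~3.6) rather than passing through the exponential sequence, and it simply asserts the equivalence ``$\dO_X(a)$ globally generated $\iff a \geq 0$'' without the intersection-theoretic justification you supply.
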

\begin{proof}
By the Lefschetz hyperplane theorem for $\Pic(-)$, 
see \cite[Exp.~XII Cor.~3.6]{grothendieck_cohomologie_1965}, 
the group $\Pic(X)$ is generated by $\dO(1)|_X$. The line bundle $\cL = \dO(a)|_X$ is ample if and only if $a \geq 1$, and it is globally generated if and only if $a \geq 0$. 

The adjoint bundle $\omega_X \otimes \cL$ for a product  $\cL = \cL_1 \otimes \ldots \otimes \cL_m$ of ample line bundles $\cL_i = \dO(a_i)|_X$ is by adjunction
\[
\omega_X \otimes \cL = \dO\big(- (n+r+1) + \sum_{i=1}^m a_i + \sum_{i=1}^r d_i \big)
\]
For a fixed $m$ all possible $a_i \geq 1$ lead to globally generated adjoint bundles if and only if the special case $a_i = 1$ for all $i=1, \ldots, m$ leads to a globally generated line bundle, equivalently if
\[
m  \geq (n+r+1) -  \sum_{i=1}^r d_i.
\]
This translates into the claimed formula for $\conFN(X)$.
\end{proof}

\subsection{Simply connected surfaces} While complete intersections can treat dimension $\geq 3$, for surfaces we argue with explicit examples.

\begin{prop}
\label{prop:FNsurfaces simply connected}
Let $0 \leq k \leq 3$ be an integer. Then there is a simply connected smooth projective surface $X$  with convex Fujita number $k$. 
\begin{enumerate}[align=left,labelindent=0pt,leftmargin=*]
\item 
\label{propitem:FNsurface sc1}
More concretely, for $1 \leq k \leq 3$ the blow up of $\bP^2$  in $3-k$ points has $\conFN(X) = k$.
\item 
\label{propitem:FNsurface sc2}
A very general hypersurface $X$ in $\bP^3$ of degree $d \geq 5$ is simply connected, has convex Fujita number $\conFN(X) = 0$ and is of general type.
\end{enumerate}
\end{prop}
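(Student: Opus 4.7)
Parts \eqref{propitem:FNsurface sc1} and \eqref{propitem:FNsurface sc2} are independent and I treat them separately. I begin with \eqref{propitem:FNsurface sc2}: the Lefschetz hyperplane theorem provides $\pi_1(X) = \pi_1(\bP^3) = 1$; adjunction identifies $\omega_X$ with $\dO_X(d-4)$, which is ample for $d \geq 5$, so $X$ is of general type; and the Noether--Lefschetz theorem ensures $\Pic(X) = \bZ \cdot \dO_X(1)$ for very general $X$ of degree $d \geq 4$. Consequently every ample line bundle on $X$ has the form $\dO_X(a)$ with $a \geq 1$, and for any $s \geq 0$ and ample $\cL_i = \dO_X(a_i)$ the bundle $\omega_X \otimes \cL_1 \otimes \cdots \otimes \cL_s = \dO_X(d - 4 + a_1 + \cdots + a_s)$ is globally generated, the restriction map from $\rH^0(\bP^3, \dO(N))$ being surjective for $N \geq 1$. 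Hence $\conFN(X) = 0$.

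For \eqref{propitem:FNsurface sc1} I split on $k$. The case $k = 3$ is the example $\conFN(\bP^2) = 3$ recorded in \secref{sec:intro}. For $k = 2$, let $X = \Bl_p(\bP^2)$ with Picard basis $H, E$, intersection form $H^2 = 1,\ H \cdot E = 0,\ E^2 = -1$, canonical class $K_X = -3H + E$, and ample cone $\{aH - bE : a > b \geq 1\}$. Both $H$ and $H - E$ are base-point-free (they define the blow-down to $\bP^2$ and the ruling to $\bP^1$ by lines through $p$). For any two ample divisors $L_1 = a_1 H - b_1 E$ and $L_2 = a_2 H - b_2 E$ one has
\[
K_X + L_1 + L_2 \; = \; (a_1 + a_2 - b_1 - b_2 - 2)\, H \, + \, (b_1 + b_2 - 1)(H - E),
\]
with non-negative coefficients (each summand $a_i - b_i - 1 \geq 0$), so the sum is globally generated and $\conFN(X) \leq 2$. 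Conversely $L = 2H - E$ is ample and $K_X + L = -H$ has no sections, giving $\conFN(X) \geq 2$.

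For $k = 1$, let $X = \Bl_{p_1, p_2}(\bP^2)$ with Picard basis $H, E_1, E_2$, canonical class $K_X = -3H + E_1 + E_2$, and ample cone $\{aH - b_1 E_1 - b_2 E_2 : b_i \geq 1,\ a \geq b_1 + b_2 + 1\}$. For any ample $L = aH - b_1 E_1 - b_2 E_2$ one writes
\[
K_X + L \; = \; (a - 1 - b_1 - b_2)\, H \, + \, (b_1 - 1)(H - E_1) \, + \, (b_2 - 1)(H - E_2),
\]
a non-negative combination of the base-point-free classes $H$, $H - E_1$, $H - E_2$. Hence $K_X + L$ is globally generated and $\conFN(X) \leq 1$, while $K_X$ is anti-ample (and thus not effective), forcing $\conFN(X) \geq 1$. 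The only slightly delicate step throughout is verifying that each $H - E_i$ is base-point-free on $X$; this is handled by the pencil of lines through $p_i$, whose reducible member $\tilde\ell_{12} + E_{3-i} \in |H - E_i|$ together with the generic members (strict transforms of lines through $p_i$ avoiding $p_{3-i}$) excludes the opposite exceptional divisor from the base locus.
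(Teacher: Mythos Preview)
Your proof is correct and follows the same overall structure as the paper's: the same case split on $k$, the same examples, and the same use of Lefschetz and Noether--Lefschetz for part~\eqref{propitem:FNsurface sc2}. The one tactical difference is in part~\eqref{propitem:FNsurface sc1}: the paper observes that the blow-ups of $\bP^2$ in one or two (torus-invariant) points are toric, so that nef is equivalent to globally generated, and then checks nefness of $K_X + \sum L_i$ directly; you instead write $K_X + \sum L_i$ explicitly as a non-negative integer combination of the base-point-free classes $H$ and $H - E_i$. Your approach is a touch more elementary (no appeal to the toric machinery) at the cost of the small verification that $|H - E_i|$ is base-point-free, which you handle correctly. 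One minor omission: you do not mention that the blow-ups in part~\eqref{propitem:FNsurface sc1} are simply connected; the paper notes this follows from birational invariance of $\pi_1$.
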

\begin{proof}
\eqref{propitem:FNsurface sc1} These blow up surfaces are simply connected by birational invariance of  $\pi_1$. 
For $k=3$ we  deal with $X = \bP^2$ that has $\conFN(\bP^2) = 3$ because of the ample line bundle $\dO(1)$. 

\smallskip

For $k = 2$ we consider the blow up $X \to \bP^2$ in one point. This is the Hirzebruch surface $\bP(\dO \oplus \dO(-1))$, and $\Pic(X)$ is generated by the class of a fiber $F$ and the class of a section $S$ with self intersection $(S^2) = -1$. A divisor $L = aS + bF$ is nef if and only if $b \geq a \geq 0$. Consequently, a divisor $L = aS+bF$ is ample, i.e.\ in the interior of the nef cone, if and only if $b > a > 0$.  Moreover, as Hirzebruch surfaces are toric varieties, $aS + bF$ is globally generated if and only it is nef, see 
\cite[Theorem 3.1]{mustata_vanishing_2002}. The canonical class is $K = -2S - 3F$. For ample divisors $L_i = a_i S + b_i F$, $i=1,\ldots, m$, we find that
\[
K + \sum_{i=1}^m L_i = (-2 + \sum_{i=1}^m a_i )S + (-3 + \sum_{i=1}^m b_i ) F.
\] 
For $L_i = S + 2F$ this becomes $(m-2)S + (2m-3)F$, and that is nef, hence globally generated, if and only if  $m \geq 2$. When $m \geq 2$, for general ample divisor $L_i$, then 
\[
-3 + \sum_{i=1}^m b_i \geq -3 + \sum_{i=1}^m (a_i+1) > -2  + \sum_{i=1}^m a_i  \geq 0,
\]
and so the corresponding adjoint divisor is globally generated. This shows that $\conFN(X) = 2$.

\smallskip

For $k = 1$ we consider the blow up $X \to \bP^2$ in two points. This is a del Pezzo surface of degree $7$,
and $\Pic(X)$ is generated by the pullback $H$ of the line and the two exceptional fibers $E_1$ and $E_2$. A divisor $L = dH - a_1 E_1 -a_2E_2$ is nef,  if and only if 
\[
d \geq a_1 + a_2 \quad  \text{ and}  \quad a_i \geq 0, \text{ for $i = 1,2$}.
\]
Consequently, $L$ being ample means that all inequalities are strict. Since we may think of $X$ as being the blow up in two torus invariant points, $X$ is a toric variety and so again by \cite[Theorem 3.1]{mustata_vanishing_2002} a divisor $L$ is globally generated if and only if $L$ is nef. The canonical divisor is $K = -3H + E_1 + E_2$ and not globally generated, hence $\conFN(X) \geq 1$. For any ample divisor $L = dH - a_1E_1 - a_2 E_2$, i.e.\ $d \geq a_1 + a_2 + 1$ and $a_i \geq 1$, the adjoint divisor
\[
K + L = (d-3)H - (a_1-1)E_1 - (a_2-1)E_2 
\]
has $(a_i-1) \geq 0$ and 
\[
d-3 \geq a_1 + a_2 + 1 - 3  = (a_1 - 1) + (a_2 - 1),
\]
hence $K+L$ is globally generated. This shows that $\conFN(X) = 1$.

\smallskip

\eqref{propitem:FNsurface sc2}  The hyperplane is simply connected due to Lefschetz hyperplane theorem for the fundamental group. By Noether-Lefschetz \cite{lefschetz_certain_1921} and degree $d \geq 4$ (see also \cite[Exp.~XIX Th\'eor\`eme 1.2]{deligne_groupes_1973}), the Picard group $\Pic(X)$ is generated by $\dO(1)|_X$. This is the reason to restrict to very general hyperplanes. The convex Fujita number is then calculated as in the proof of \pref{prop:FNcompleteintersection}. Since the degree is at least $d \geq 5$, the canonical bundle $\omega_X = \dO(d-4)|_X$ is very ample.
\end{proof}

\subsection{Convex Fujita numbers for simply connected varieties}

We now prove \tref{thmABC:FNsimplyconnected}.

\begin{thm}
\label{thm:FNsimplyconnected}
Let  $n \geq 2$ be an integer, and let $0 \leq k \leq n+1$. There is a simply connected smooth projective variety $X$ of dimension $n$ with convex Fujita number $\conFN(X) = k$. Moreover, if $k=0$, then $X$ can be chosen to be of general type.
\end{thm}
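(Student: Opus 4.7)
\begin{pro*}[Proof plan]
The plan is to split by the dimension, handing off almost all of the work to the two results proved earlier in the section.

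For $n=2$, the range $0 \leq k \leq n+1$ is exactly $0 \leq k \leq 3$, and the assertion is already the content of \pref{prop:FNsurfaces simply connected}. In particular, the case $k=0$ is realized by a very general quintic surface in $\bP^3$, which is simply connected and of general type, while the cases $k=1,2,3$ are realized by blow-ups of $\bP^2$ at $3-k$ points.

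For $n \geq 3$ I would exhibit each of the required varieties as either a projective space or a very general smooth hypersurface in $\bP^{n+1}$ and then invoke \pref{prop:FNcompleteintersection}. Concretely:
\begin{itemize}
\item For $k = n+1$, take $X = \bP^n$, for which $\conFN(\bP^n) = n+1$ as observed in the introduction.
\item For $1 \leq k \leq n$, let $X \subset \bP^{n+1}$ be a very general smooth hypersurface of degree $d = (n+2)-k \geq 2$. Then \pref{prop:FNcompleteintersection} applied with $r=1$ and $d_1 = d$ gives
\[
\conFN(X) \equ \max\{0, (n+2)-d\} \equ k.
\]
\item For $k = 0$, take $X \subset \bP^{n+1}$ to be a very general smooth hypersurface of degree $d \geq n+3$. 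Then \pref{prop:FNcompleteintersection} gives $\conFN(X) = 0$, and $\omega_X \cong \dO_{\bP^{n+1}}(d-n-2)|_X$ is very ample, so $X$ is of general type.
\end{itemize}
In all three sub-cases for $n \geq 3$, simple connectivity follows from the Lefschetz hyperplane theorem for $\pi_1$, and smoothness is generic in the corresponding linear system.

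Since the heavy lifting is already done by \pref{prop:FNcompleteintersection} and \pref{prop:FNsurfaces simply connected}, no step is truly hard. The only points to double-check are (i) that in the case $k=0$, $n\geq 3$, the degree bound $d \geq n+3$ (rather than $d \geq n+2$, which would only give a Calabi--Yau) is needed to secure general type, and (ii) that the range $1 \leq k \leq n$ stays inside $d \geq 2$, so that $X$ is a genuine hypersurface rather than a hyperplane; the endpoint $k=n+1$ is precisely where $d$ would degenerate to $1$, which is why it is handled separately as $X = \bP^n$.
\end{pro*}
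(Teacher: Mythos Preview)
Your proposal is correct and follows essentially the same route as the paper: the surface case is \pref{prop:FNsurfaces simply connected}, and for $n \geq 3$ one takes hypersurfaces in $\bP^{n+1}$ of degree $n+2-k$ (with $d_1 > n+2$ for the general type assertion when $k=0$) and appeals to \pref{prop:FNcompleteintersection}. One small simplification: for $n \geq 3$ you do not need the hypersurfaces to be very general, since \pref{prop:FNcompleteintersection} relies on the Lefschetz hyperplane theorem for $\Pic$ (valid for any smooth complete intersection of dimension $\geq 3$), not on Noether--Lefschetz.
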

\begin{proof}
We remark first that $n$-dimensional complete intersections in $\bP^{n+r}$ are simply connected by the Lefschetz hyperplane theorem as long as $n \geq 2$. For $n \geq 3$ thus the proof reduces to \pref{prop:FNcompleteintersection}, because all values of $k$ can be obtained, e.g. by $r = 1$ and $d_1 = n+2-k$. If $k=0$, then we choose $r=1$ and $d_1 > n+2$ which forces $\omega_X$ to be very ample and $X$ to be of general type. 

The case $n=2$ is nothing but \pref{prop:FNsurfaces simply connected}.
\end{proof}

\section{Varieties with prescribed fundamental group}
\label{sec:FujitaSimple}

In this section we construct smooth projective varieties with a given fundamental group and varying dimension and convex Fujita number.

\subsection{Fujita simple surfaces with prescribed fundamental group}

We now prove the surface case of \tref{thmABC:FNsimple}.

\begin{prop}
\label{prop:FNsimpleSurface}
Let $\pi$ be a projective group. Then there is a smooth projective surface $X$ of general type with fundamental group isomorphic to $\pi$ and convex Fujita number $\conFN(X)= 0$.
\end{prop}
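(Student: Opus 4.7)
The plan is to realize $X$ as a very general smooth surface cut from an auxiliary product variety by a complete intersection of divisors all lying in a single divisible ample linear series, and then to deduce $\conFN(X)=0$ by combining \pref{prop:FNsurfacesReider divisible intersection numbers} with the very ampleness of $K_X$. Indeed, that proposition gives $\conFN(X)\leq 1$ as soon as the intersection pairing on $\NS(X)$ takes values in $d\bZ$ for some $d\geq 5$; the missing case $s=0$ in the definition of $\conFN$ is then supplied by $K_X$ itself being globally generated.

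Concretely: since $\pi$ is projective, choose a smooth connected projective variety $V$ with $\pi_1(V)\cong\pi$, replacing $V$ by $V\times\bP^1$ if necessary to ensure $\dim V\geq 2$. Fix $d\geq 5$, an integer $N\geq 1$, and a sufficiently positive very ample line bundle $A$ on $Z:=V\times\bP^N$. Put $L:=dA$ and $n:=\dim Z-2\geq 1$, and take a complete intersection $X=H_1\cap\cdots\cap H_n$ with $H_i\in|L|$, arranging by Bertini that all partial intersections are smooth and that $X$ is very general in $|L|$ on the smooth $3$-fold $Y:=H_1\cap\cdots\cap H_{n-1}$. Iterated Lefschetz yields $\pi_1(X)\cong\pi_1(Z)\cong\pi_1(V)\times\pi_1(\bP^N)\cong\pi$, because every intermediate cut is by an ample smooth divisor of dimension $\geq 2$. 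By adjunction $\omega_X\cong(\omega_Z\otimes\cO_Z(nL))|_X$, which is very ample once $A$ is chosen large enough that $\omega_Z\otimes\cO_Z(nL)$ is very ample on $Z$; in particular $K_X$ is globally generated and $X$ is of general type.

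To verify the divisibility of the intersection form on $\NS(X)$, Grothendieck--Lefschetz (SGA~2) identifies $\NS(Z)$ with $\NS(Y)$ along the $n-1$ cuts that leave ambient dimension $\geq 3$, and the Noether--Lefschetz theorem for sufficiently positive line bundles on a smooth projective $3$-fold ensures that, for $X$ very general in $|L|$ on $Y$, the restriction $\NS(Y)\to\NS(X)$ is surjective. Hence every class on $X$ lifts to a class $\bar C\in\NS(Z)$, and intersection numbers compute on $Z$ as
\[
(C_1\cdot C_2)_X\;=\;\bigl(\bar C_1\cdot\bar C_2\cdot H_1\cdots H_n\bigr)_Z\;=\;d^n\bigl(\bar C_1\cdot\bar C_2\cdot A^n\bigr)_Z\;\in\;d\bZ.
\]
Combining this with the previously established very ampleness of $K_X$ gives $\conFN(X)=0$.

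The anticipated main difficulty is the last step: securing the Noether--Lefschetz conclusion at the final cut, where the ambient dimension drops from $3$ to $2$. This is what forces the positivity of $L=dA$ to be quantitatively large and is the only place the argument departs from a pure application of Bertini plus classical Lefschetz-type theorems. Once the Moishezon--Green hypotheses are met, the rest is routine.
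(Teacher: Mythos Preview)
Your proposal is correct and follows essentially the same approach as the paper: reduce to a smooth projective $3$-fold $Y$ with $\pi_1(Y)\cong\pi$, take $X$ as a very general member of a sufficiently positive and $d$-divisible ample linear series on $Y$, invoke Lefschetz for $\pi_1$ and Noether--Lefschetz for $\Pic$ at the $3$-fold-to-surface step, conclude $d$-divisibility of the intersection form and apply \pref{prop:FNsurfacesReider divisible intersection numbers}, and finish with adjunction to see that $\omega_X$ is (very) ample and globally generated. The only cosmetic difference is that the paper first cuts down to a $3$-fold and then makes a single divisible cut, whereas you carry all cuts in the same series $|dA|$ on $Z=V\times\bP^N$; this yields the stronger-than-needed $d^n$-divisibility but changes nothing in substance.
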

\begin{proof}
Let $Y$ be a smooth projective variety with $\pi_1(Y) = \pi$. Upon replacing $Y$ by its product with some projective space, we may assume $\dim(Y) \geq 3$.  Passing to a smooth complete intersection of $\dim(Y)-3$ very ample divisors if necessary, we assume going forward that $\dim(Y) = 3$. 

Fixing an ample divisor $H$ on $Y$ and an integer $p \geq 5$ such that $K_Y + pH$ is ample and globally generated, we choose $X$ to be a very general smooth hypersurface in the linear system $|pH|$.  By the Lefschetz hyperplane theorem, $\pi_1(X)$ is isomorphic to $\pi$. By Noether-Lefschetz for arbitrary $3$-folds, see \cite{joshi_noether-lefschetz_1995} and more effectively using global generation of $K_Y + pH$ by \cite[Theorem~1]{ravindra_girivaru_v_noetherlefschetz_2009},
we have 
\[
\Pic(X) = \Pic(Y)
\]
via restriction. This means that for all $\cL_1, \cL_2 \in \Pic(X)$ there are extensions $\cM_1, \cM_2 \in \Pic(Y)$ with $\cL_i = \cM_i|_X$ and thus
\[
(\cL_1 \cdot  \cL_2)_X = (\cM_1 \cdot  \cM_2 \cdot \dO_Y(pH))_Y \in p\bZ
\]
is divisible by $p$.  This shows $\conFN(X) \leq 1$ by \pref{prop:FNsurfacesReider divisible intersection numbers}.

By adjunction, the canonical bundle of $X$ is $\omega_X = (\omega_Y \otimes \dO_Y(pH))|_X$. Since $\omega_Y \otimes \dO_Y(pH)$ is ample and globally generated, its restriction to $X$ is also ample and globally generated, so $\conFN(X) = 0$ as desired.
\end{proof}

 \tref{thmABC:FNsimple} now follows easily with the exception of dimension $n=3$. Indeed, given a projective group $\pi$ we first choose a Fujita simple surface $S$ of general type with $\pi_1(S) = \pi$ as in \pref{prop:FNsimpleSurface}. If $n=2$ we are done. Otherwise by \tref{thmABC:FNsimplyconnected}, since we excluded $n=3$, we have a complete intersection $Y$ of general type and dimension $n-2$ which is Fujita simple and simply connected. By  \pref{propABC:FNproductwithPn} the variety $X = S \times Y$ is Fujita simple, it is of general type and dimension $n$, and with $\pi_1(X) = \pi$ by the K\"unneth formula for $\pi_1$. This constructs the variety asked for in  \tref{thmABC:FNsimple}. 

\subsection{Totally branched cyclic covers}

It remains to construct a threefold that satisfies the needs of \tref{thmABC:FNsimple}. Since the construction is not particularly different in all dimensions $n \geq 3$, we do not specialize to  
threefolds now. We dealt with surfaces first in \pref{prop:FNsimpleSurface} because we wanted to highlight how far one gets by only invoking Reider's method instead of the analytic methods of Angehrn-Siu \cite{angehrn_effective_1995}.

\begin{rmk}
We first recall the construction of the cyclic and totally branched covering $X \to Y$ of a smooth projective variety $Y$ with respect to a line bundle $\cL \in \Pic(Y)$, a degree $d \geq 1$, and a smooth divisor $B$ (branch locus) in the linear system associated to $\cL^{\otimes d}$, more precisely to an isomorphism 
\[
s : \cL^{\otimes d} \xrightarrow{\sim} \dO_Y(B).
\]
The $\mu_d$-torsor $X \to Y$ is constructed as the relative spectrum
\[
f: X = \Spec_Y\big(\Sym^\bullet(\cL^{-1})/\cL^{\otimes -d} = \cO_Y(-B)\big) \to Y.
\]
which, locally with an equation $\{s=0\}$ for the Cartier divisor $B$, solves the equation $t^d = s$ inside the line bundle $\cL$ globally. The variety $X$ is again smooth projective by Abhyankar's lemma \cite[Exp.~XIII Proposition~5.1]{SGA1}, which also shows that the ramification locus $R = f^{-1}(B)_\redu$  is smooth, in fact isomorphic to $B$. We have the well known relations
\begin{align*}
	\cO_X(R) & \simeq f^\ast \cL, \\ 
	f^\ast B & =  d \cdot R, \\
	\omega_X(R) & \simeq f^\ast (\omega_Y(B)).
\end{align*}
It follows that  
\begin{equation}
	\label{eq:ramifiedcanonical bundle formula}
	\omega_X \simeq f^\ast(\omega_Y(B)) \otimes \dO_X(-R)  \simeq f^\ast \big(\omega_Y \otimes \cL^{\otimes (d-1)}\big).
\end{equation}
\end{rmk}

The next lemmas record the effect of various Lefschetz theorems applied to functors evaluated at the following diagram showing the cyclic totally branched covering constructed above. 
\begin{equation}
	\label{eq:branchedcovering}
	\xymatrix@M+1ex{
		R \ar@{^(->}[r] \ar[d]^{f|_R} \ar[d]_{\simeq} & X \ar[d]^f \\
		B \ar@{^(->}[r] & Y.
	}
\end{equation}

\begin{lem}
	\label{lem:pi1 in branched cover}
	Let $f: X \to Y$ be the branched $\mu_d$-cover constructed above associated to $\cL$ and $B$. If $\cL$ is ample and the dimension $\dim(Y)$ is at least $3$, then $f$ induces an isomorphism
	\[
	f_\ast : \pi_1(X) \xrightarrow{\sim} \pi_1(Y).
	\]
\end{lem}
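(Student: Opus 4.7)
The plan is to apply the Lefschetz hyperplane theorem for the topological fundamental group on both $Y$ and $X$, and then to connect the two applications via the isomorphism $f|_R \colon R \xrightarrow{\sim} B$ built into diagram~\eqref{eq:branchedcovering}. The dimension hypothesis $\dim(Y) \geq 3$ is exactly what places us in the regime where the Lefschetz theorem yields an \emph{isomorphism} on $\pi_1$ rather than only a surjection.

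I would proceed in three steps. First, because $\cL$ is ample on $Y$, so is $\cL^{\otimes d}$, and hence $B \in |\cL^{\otimes d}|$ is a smooth ample divisor in $Y$. With $\dim(Y) \geq 3$, Lefschetz gives $\pi_1(B) \xrightarrow{\sim} \pi_1(Y)$. Second, the identity $\dO_X(R) \simeq f^\ast \cL$ from the construction, combined with the fact that the pullback of an ample line bundle along the finite morphism $f$ stays ample, shows that $R$ is an ample divisor on $X$. Since $R \simeq B$ via $f|_R$ it is smooth, and $\dim(X) = \dim(Y) \geq 3$, so Lefschetz applied to $X$ yields $\pi_1(R) \xrightarrow{\sim} \pi_1(X)$. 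Third, passing to $\pi_1$ in diagram~\eqref{eq:branchedcovering} produces a commutative square whose left vertical map $\pi_1(R) \to \pi_1(B)$ is an isomorphism (because $f|_R$ is), and whose two horizontal maps are isomorphisms by the previous two steps; this forces $f_\ast \colon \pi_1(X) \to \pi_1(Y)$ to be an isomorphism as well.

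Two mild auxiliary points will need a brief verification. One is connectedness of $X$, needed to speak unambiguously of $\pi_1(X)$; this follows from the standard cyclic-cover decomposition $f_\ast \dO_X \simeq \bigoplus_{k=0}^{d-1} \cL^{\otimes -k}$ together with the vanishing $\rH^0(Y, \cL^{\otimes -k}) = 0$ for $k \geq 1$ coming from ampleness of $\cL$, which forces $\rH^0(X,\dO_X) = \bC$. The other is the precise Lefschetz statement being used, namely that for a smooth ample divisor $D$ in a smooth projective variety $Z$ with $\dim(Z) \geq 3$ the inclusion induces an isomorphism $\pi_1(D) \xrightarrow{\sim} \pi_1(Z)$; this is classical. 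I do not foresee a serious obstacle: essentially, Lefschetz reduces the comparison of $\pi_1(X)$ and $\pi_1(Y)$ to the tautological isomorphism $R \xrightarrow{\sim} B$ on which the whole cover construction is based.
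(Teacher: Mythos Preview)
Your proposal is correct and follows essentially the same approach as the paper: apply the Lefschetz hyperplane theorem to the three maps $R \hookrightarrow X$, $B \hookrightarrow Y$, and $f|_R \colon R \xrightarrow{\sim} B$ in diagram~\eqref{eq:branchedcovering} to obtain isomorphisms on $\pi_1$, and conclude that the fourth map $f_\ast$ is an isomorphism as well. The paper's proof is terser, but your added verifications (ampleness of $R$ via $\dO_X(R) \simeq f^\ast \cL$ and finiteness of $f$, connectedness of $X$) are exactly the details that underlie its one-line assertion.
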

\begin{proof}
	Since $\cL$ is ample, both $B$ in $Y$ and $R$ in $X$ are ample. By the Lefschetz hyperplane theorem, the functor $\pi_1$ applied to diagram \eqref{eq:branchedcovering} has isomorphisms for all but the map induced by  $f$. Hence also $f_\ast$ is an isomorphism.
\end{proof}

\begin{rmk}
\lref{lem:pi1 in branched cover} also holds in case $\dim(Y) = 2$ and then with the weaker assumption that $B$ is a smooth divisor with $B^2 > 0$ instead of being an ample divisor. This is proven by Kharlamov and Kulikov in \cite[Proposition 1]{kulikov_numerically_2014} based on work of Nori. We will quickly sketch the argument. By an application of Nori's weak Lefschetz theorem, namely  \cite[Proposition 3.26]{nori_zariski_1983}, the natural map $\pi_1(B) \to \pi_1(Y)$ has image of index bounded by $B^2/(B^2 - 2r(B))$, where $r(B) = 0$ since $B$ is smooth. Hence the map is still surjective and consequently $f_\ast: \pi_1(X) \to \pi_1(Y)$ is surjective, since $B$ lifts to $Y$ as $B \simeq R \to Y$. In order to show that $f_\ast$ is injective we consider the complements $U = X  - R$ and $V = Y - B$. The restriction $f|_U : U \to V$ is finite \'etale, hence $f|_{U,\ast} : \pi_1(U) \to \pi_1(V)$ is injective, in fact with cokernel isomorphic to $\mu_d = \Aut(U/V)$. We obtain a map of short exact sequences with $N_R$ and $N_B$ implicitly defined as the respective kernels
\[
\xymatrix@M+1ex@R-2ex{
1 \ar[r] & N_R \ar[d] \ar[r] & \pi_1(U) \ar[d]^{f|_{U,\ast} } \ar[r] & \pi_1(X) \ar[d]^{f_\ast} \ar[r] & 1 \\
1 \ar[r] & N_B \ar[r] & \pi_1(V) \ar[r] & \pi_1(Y) \ar[r] & 1  \ .
}
\]
The rows of the diagram are central extensions by \cite[Proposition 6.5]{nori_zariski_1983}. 
Kharlamov and Kulikov analyze the proof of  \cite[Proposition 3.27]{nori_zariski_1983} and show that $N_B/N_R$ is isomorphic to $\mu_d$ because $N_B$ and $N_R$ are abelian and generated by the respective inertia generators of a small loop around $B$ (resp.\ $R$). The snake lemma (valid also for non-abelian groups) completes the proof that $f_\ast$ is an isomorphism.
\end{rmk}

\begin{lem}
	\label{lem:Pic in branched cover}
	Let $f: X \to Y$ be the branched $\mu_d$-cover constructed above associated to $\cL$ and $B$. If $\cL$ is ample and the dimension $\dim(Y)$ is at least $4$, then $f$ induces an isomorphism
	\[
	f^\ast : \Pic(Y) \xrightarrow{\sim} \Pic(X).
	\]
Moreover, if $d \gg 0$ is sufficiently large and in addition $B$ is chosen to be very general, then the same conclusion holds when $\dim(Y) \geq 3$.
\end{lem}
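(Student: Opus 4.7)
\medskip

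\textbf{Proof plan.} The strategy for both parts is to analyze the commutative square
\[
\xymatrix{
\Pic(Y) \ar[r]^{f^\ast} \ar[d] & \Pic(X) \ar[d] \\
\Pic(B) \ar[r]^{(f|_R)^\ast} & \Pic(R)
}
\]
in which the verticals are restriction to $B$ and to $R$, and the bottom horizontal is an isomorphism because $f|_R \colon R \xrightarrow{\sim} B$. Note that both $R \subset X$ and $B \subset Y$ are ample divisors: $\cO_X(R) \simeq f^\ast \cL$ is the pullback of an ample line bundle along the finite morphism $f$, hence ample, while $\cO_Y(B) \simeq \cL^{\otimes d}$ is ample by assumption.

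For the first part, $\dim(Y) \geq 4$ forces $\dim B \geq 3$ and $\dim X \geq 4$, so the Grothendieck--Lefschetz hyperplane theorem for Picard groups \cite[Exp.~XII, Cor.~3.7]{grothendieck_cohomologie_1965} turns both vertical restriction maps into isomorphisms; commutativity of the square then identifies $f^\ast$ as an isomorphism as well.

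For the second part, $\dim(Y) = 3$, the targets $\Pic(B)$ and $\Pic(R)$ live on surfaces and Grothendieck--Lefschetz no longer applies; I would replace it by two independent inputs. First, an effective Noether--Lefschetz theorem for threefolds (Ravindra--Srinivas \cite{ravindra_girivaru_v_noetherlefschetz_2009}, as used already in the proof of \pref{prop:FNsimpleSurface}) ensures that for $d \gg 0$ and $B \in |\cL^{\otimes d}|$ very general, the left vertical map $\Pic(Y) \to \Pic(B)$ is an isomorphism. Second, I would show that the right vertical $\Pic(X) \to \Pic(R)$ is \emph{injective} by a five-lemma comparison of the exponential sequences of $X$ and $R$: Kodaira vanishing applied to the ample line bundle $\cO_X(R)$ via the ideal sequence $0 \to \cO_X(-R) \to \cO_X \to \cO_R \to 0$ yields $H^1(X, \cO_X) \xrightarrow{\sim} H^1(R, \cO_R)$, while the classical Lefschetz hyperplane theorem for the ample divisor $R$ in the smooth threefold $X$ gives $H^1(X, \bZ) \xrightarrow{\sim} H^1(R, \bZ)$ together with $H^2(X, \bZ) \inj H^2(R, \bZ)$; the five lemma then yields the desired injectivity.

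With these inputs in hand the square collapses as follows: given $\cN \in \Pic(X)$, the bottom and left isomorphisms produce a unique $\cM \in \Pic(Y)$ with $(f^\ast \cM)|_R = \cN|_R$ in $\Pic(R)$, so the injectivity of the right vertical forces $\cN = f^\ast \cM$; injectivity of $f^\ast$ is obtained dually by chasing any element of its kernel around the square. I expect the main technical obstacle to be precisely the injectivity of $\Pic(X) \to \Pic(R)$ in the three-dimensional case, since it requires combining Kodaira vanishing with Lefschetz hyperplane through the exponential sequence, rather than invoking a single off-the-shelf statement.
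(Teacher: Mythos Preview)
Your proposal is correct and follows the same commutative-square strategy as the paper. The only substantive difference is in how you obtain the injectivity of $\Pic(X) \to \Pic(R)$ when $\dim Y = 3$: you reconstruct it by hand via the exponential sequence, Kodaira vanishing, and the Lefschetz hyperplane theorem for integral cohomology, whereas the paper simply invokes the injectivity part of the Grothendieck--Lefschetz theorem for Picard groups \cite[Exp.~XII, Cor.~3.6]{grothendieck_cohomologie_1965} directly (the same reference already gives injectivity once $\dim X \geq 3$, not only the isomorphism for $\dim X \geq 4$). So contrary to your expectation, there \emph{is} an off-the-shelf statement covering this case; your exponential-sequence argument is essentially a proof of that statement and is valid, but unnecessary. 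A minor bibliographic difference: the paper cites Joshi \cite{joshi_noether-lefschetz_1995} rather than Ravindra--Srinivas for the Noether--Lefschetz step on $Y$, though either works under the hypothesis $d \gg 0$.
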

\begin{proof}
Again, since $\cL$ is ample, both $B$ in $Y$ and $R$ in $X$ are ample. 

Let first $\dim(Y)$ be at least $4$. By the Lefschetz hyperplane theorem for $\Pic(-)$, 
see \cite[Exp.~XII Cor.~3.6]{grothendieck_cohomologie_1965}, the functor $\Pic$ applied to diagram \eqref{eq:branchedcovering} has isomorphisms for all but the map induced by  $f$. Hence also $f^\ast$ is an isomorphism.

For $\dim(Y) = 3$ and very general $B$ we use again the Noether-Lefschetz theorem of \cite{joshi_noether-lefschetz_1995} to deduce that the restriction $\Pic(Y) \to \Pic(B)$ is an isomorphism. We cannot use the same argument for the restriction $\Pic(X) \to \Pic(R)$, since we do not control whether $R$ is very general in $X$, and most likely it is not. But by the Lefschetz hyperplane theorem for $\Pic(-)$, 
see \cite[Exp.~XII Cor.~3.6]{grothendieck_cohomologie_1965}, the restriction $\Pic(X) \inj \Pic(R)$ is still injective, and that is sufficient to conclude as in the proof before when $\dim(Y) \geq 4$.
\end{proof}

\begin{prop}
	\label{prop:FNin branched cover}
	Let $f: X \to Y$ be the branched $\mu_d$-cover constructed above associated to an ample line bundle $\cL$ and a smooth divisor $B$. We assume that the dimension $\dim(Y)$ is at least $3$, where if $\dim(Y) = 3$ we ask $d \gg 0$ to be large and $B$ to be very general. Then the convex Fujita number is bounded as follows:
	\[
	\conFN(X) \leq \max\{0, \conFN(Y) + 1 - d \}.
	\]
Moreover,  if $d - 2 \geq \conFN(Y)$, then $\omega_X$ is ample and globally generated. 
\end{prop}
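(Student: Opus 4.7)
The plan is to use \lref{lem:Pic in branched cover} to lift every ample line bundle on $X$ to $Y$ and then combine the canonical bundle formula \eqref{eq:ramifiedcanonical bundle formula} with the definition of $\conFN(Y)$. Under our hypotheses, pullback induces an isomorphism $f^\ast \colon \Pic(Y) \xrightarrow{\sim} \Pic(X)$, and since $f$ is finite surjective, $f^\ast \cN$ is ample on $X$ if and only if $\cN$ is ample on $Y$. Thus any ample line bundles $\cM_1, \ldots, \cM_m$ on $X$ can be written as $\cM_i = f^\ast \cN_i$ with $\cN_i$ ample on $Y$, and \eqref{eq:ramifiedcanonical bundle formula} yields
\[
\omega_X \otimes \cM_1 \otimes \cdots \otimes \cM_m \;\simeq\; f^\ast\bigl( \omega_Y \otimes \cL^{\otimes(d-1)} \otimes \cN_1 \otimes \cdots \otimes \cN_m \bigr).
\]

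I would then observe that the right-hand side is the pullback of an adjoint bundle of the form $\omega_Y$ plus a sum of $m + (d-1)$ ample line bundles (the $\cN_i$ together with $d-1$ copies of $\cL$). By the definition of the convex Fujita number, this adjoint bundle is globally generated on $Y$ as soon as $m + (d-1) \geq \conFN(Y)$, i.e.\ $m \geq \conFN(Y) + 1 - d$. Because global generation is preserved under the pullback by the finite surjective morphism $f$ (generating sections pull back to generating sections), the adjoint bundle on $X$ is globally generated for every $m \geq \max\{0, \conFN(Y) + 1 - d\}$; when the maximum is $0$ the case $m = 0$ is included because the inequality $d - 1 \geq \conFN(Y)$ still forces $\omega_Y \otimes \cL^{\otimes(d-1)}$ to be globally generated. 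This gives the claimed bound on $\conFN(X)$. For the \emph{moreover}, assuming $d - 2 \geq \conFN(Y)$, I would split $\cL^{\otimes(d-1)} = \cL^{\otimes(d-2)} \otimes \cL$: the factor $\omega_Y \otimes \cL^{\otimes(d-2)}$ is globally generated, hence nef, so adding the ample $\cL$ makes $\omega_Y \otimes \cL^{\otimes(d-1)}$ ample, and it is also globally generated since $d - 1 > \conFN(Y)$. Both ampleness and global generation are preserved by the finite surjective pullback $f^\ast$, so $\omega_X$ is ample and globally generated.

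The main obstacle is really offloaded to \lref{lem:Pic in branched cover}: once the Picard groups are identified, the argument reduces to routine bookkeeping with the canonical bundle formula and elementary facts about finite surjective pullbacks (namely, that such pullbacks preserve and reflect ampleness and preserve global generation). The delicate work lies upstream, in the Noether--Lefschetz input, together with the choice of $d \gg 0$ and very general $B$, required when $\dim(Y) = 3$.
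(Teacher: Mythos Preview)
Your proof is correct and follows essentially the same route as the paper: invoke \lref{lem:Pic in branched cover} to write every ample line bundle on $X$ as $f^\ast$ of an ample line bundle on $Y$ (using that finite surjective pullback reflects ampleness), apply the canonical bundle formula \eqref{eq:ramifiedcanonical bundle formula} to exhibit the adjoint bundle on $X$ as the pullback of $\omega_Y$ tensored with $m+d-1$ ample line bundles, and conclude by the definition of $\conFN(Y)$ together with the fact that global generation pulls back. Your treatment of the ``moreover'' clause, splitting off one copy of $\cL$ to upgrade nef (globally generated) to ample, is likewise the paper's argument.
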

\begin{proof}
	Let $\cL_1, \ldots, \cL_s$ be ample line bundles on $Y$. Due to Lemma~\ref{lem:Pic in branched cover} there are line bundles $\cM_i$ on $X$ with $\cL_i = f^\ast \cM_i$. Moreover, as $f$ is finite, the line bundles $\cM_i$ are also  ample. Since
	\[
	\omega_X \otimes (\cL_1 \otimes \ldots \otimes \cL_s) \simeq f^\ast \big(\omega_Y \otimes \cL^{\otimes (d-1)} \otimes  \cM_1 \otimes \ldots \otimes \cM_s \big),
	\]
	and because being globally generated pulls back under morphisms, the left hand side is globally generated as soon as $s + d - 1 \geq \conFN(Y)$. The estimate for $\conFN(X)$ follows.
	
If $d - 2 \geq \conFN(X)$, then $\omega_Y \otimes \cL^{\otimes (d-2)}$ is globally generated. The tensor product of an ample line bundle $\cL$ with a globally generated line bundle $\omega_Y \otimes \cL^{\otimes (d-2)}$ is again ample, hence $\omega_X$ is ample as the pull back of  $\omega_Y \otimes \cL^{\otimes (d-1)}$.
\end{proof}

\subsection{Fujita simple varieties with prescribed fundamental group and dimension}

The proof of \tref{thmABC:FNsimple} begun in \pref{prop:FNsimpleSurface} will now be completed by the following proposition.

\begin{prop}
\label{prop:FNsimpleVariety}
Let $\pi$ be a projective group and $n \geq 3$. Then there is a smooth projective variety $X$ of general type and dimension $n$ with fundamental group isomorphic to $\pi$ and convex Fujita number $\conFN(X)= 0$.
\end{prop}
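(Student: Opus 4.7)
The plan is to realise $X$ as a totally branched cyclic cover $f \colon X \to Y$ of a carefully chosen $n$-dimensional base $Y$, applying \lref{lem:pi1 in branched cover}, \lref{lem:Pic in branched cover}, and \pref{prop:FNin branched cover}. The whole point is that the branched cover construction trades a large branch degree $d$ against the convex Fujita number of the base, while preserving the fundamental group and upgrading positivity of the canonical bundle.

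The first step is to produce a smooth projective variety $Y$ of dimension exactly $n$ with $\pi_1(Y) \simeq \pi$. Starting from any smooth projective $Z$ with $\pi_1(Z) \simeq \pi$, I adjust the dimension as follows: if $\dim(Z) < n$, I replace $Z$ by $Z \times \bP^{n-\dim(Z)}$, which has the correct fundamental group by the K\"unneth formula together with simple connectedness of projective space; if $\dim(Z) > n$, I iteratively pass to smooth very ample hyperplane sections, each step preserving $\pi_1$ by the Lefschetz hyperplane theorem for the fundamental group, stopping at dimension $n$ (which is legitimate because along the way the ambient dimension stays $\geq n \geq 3$, keeping the Lefschetz hypothesis satisfied).

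Having fixed $Y$ of dimension $n \geq 3$, I choose an ample line bundle $\cL$ on $Y$ and an integer $d$ large enough that the bounds
\[
d \,\geq\, \conFN(Y) + 2
\]
holds; such $d$ exists because $\conFN(Y) < \infty$ by \pref{prop:FNisFinite}. In case $n = 3$, I additionally require that $d$ is large enough for the hypothesis of \lref{lem:Pic in branched cover} to apply. I then pick a smooth divisor $B \in |\cL^{\otimes d}|$ (which exists by Bertini for $d$ sufficiently large), taking $B$ very general when $n = 3$, and let $f \colon X \to Y$ be the corresponding cyclic $\mu_d$-cover totally ramified over $B$. By construction $X$ is smooth projective of dimension $n$.

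Finally I read off the three required properties. By \lref{lem:pi1 in branched cover} the map $f$ induces an isomorphism $\pi_1(X) \simeq \pi_1(Y) \simeq \pi$. By \pref{prop:FNin branched cover}, the inequality $d \geq \conFN(Y) + 1$ forces $\conFN(X) \leq 0$, hence $\conFN(X) = 0$, and the stronger inequality $d - 2 \geq \conFN(Y)$ guarantees that $\omega_X$ is ample and globally generated, so $X$ is of general type. The main obstacle is really just the dimension-three case of \lref{lem:Pic in branched cover}, which forces me to take $d$ large and $B$ very general simultaneously; since both are open/unbounded conditions, they are compatible and the construction goes through for every $n \geq 3$.
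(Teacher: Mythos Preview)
Your proof is correct and follows essentially the same route as the paper: construct an $n$-dimensional $Y$ with $\pi_1(Y)\simeq\pi$ by adjusting the dimension of a given model via products with projective space and/or hyperplane sections, then take a cyclic $\mu_d$-cover $X\to Y$ branched along a smooth (very general, when $n=3$) member of $|\cL^{\otimes d}|$ with $d\geq\conFN(Y)+2$, and read off the conclusions from \lref{lem:pi1 in branched cover} and \pref{prop:FNin branched cover}. The only cosmetic difference is that the paper also arranges $Y$ to be of general type, which is unnecessary since the ampleness of $\omega_X$ comes entirely from the branched-cover formula, exactly as you note.
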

\begin{proof}
We can argue as in the proof of the surface case \pref{prop:FNsimpleSurface} that there is a smooth projective variety $Y$ of general type with $\pi_1(Y) = \pi$ and dimension $n$. Now we choose an ample line bundle $\cL$ on $Y$.

By \pref{prop:FNisFinite}, the convex Fujita number $\conFN(Y)$ is finite. We choose $d \geq \conFN(Y) + 2$ large enough such that by Bertini we find a smooth divisor $B$ in the linear system associated to $\cL^{\otimes d}$. Let $f: X \to Y$ be the branched $\mu_d$-cover constructed above associated to $\cL$ and $B$.  If $\dim(Y) = 3$ we moreover ask $d \gg 0$ to be sufficiently large and $B$ very general, so that the conclusion of 
\lref{lem:Pic in branched cover} holds. Then \pref{prop:FNin branched cover} shows that $X$ is Fujita simple and of general type.
\end{proof}

In a certain range we may improve on \tref{thmABC:FNsimple}  by even imposing the value of the Kodaira dimension. 

\begin{thm}
\label{thm:FNsimpleVarietyKodaira m}
Let $\pi$ be a projective group, let $n \geq 4$ and let $n-2 \geq m \geq 2$. Then there is a smooth projective variety $X$ of Kodaira dimension $m$ and dimension $n$ with fundamental group isomorphic to $\pi$ and convex Fujita number $\conFN(X)= 0$.
\end{thm}
\begin{proof}
We construct $X$ as a product $X = Y \times Z$. \tref{thmABC:FNsimple} provides a smooth projective $Y$ of general type and  dimension $m$ with $\pi_1(Y)$ isomorphic to $\pi$ and $\conFN(Y) = 0$. 

The factor $Z$ is obtained as a smooth hypersurface $Z \inj \bP^{n+1-m}$ of degree $n+2-m$ that, moreover, we require to be very general if $n-m = 2$. If $\dim(Z) = 2$, then $Z$ is a very general smooth quartic in $\bP^3$, hence a K3 surface with Picard group generated by $\dO(1)|_Z$. The argument of \pref{prop:FNcompleteintersection} applies to show $\conFN(Z) = 0$. If $\dim(Z) \geq 3$, then \pref{prop:FNcompleteintersection} applies directly to show $\conFN(Z) = 0$, too. 

By the Lefschetz hyperplane theorem for the fundamental group we have $\pi_1(Z) = 0$ so that $\pi_1(X)$ is isomorphic to $\pi_1(Y) \simeq \pi$. Being simply connected, $Z$ also has vanishing $\rH^1(Z,\dO_Z)$ and so \cref{cor:FNproductwithPn} yields $\conFN(X) = \max\{\conFN(Y),\conFN(Z)\} = 0$. 

As $Z$ has trivial canonical bundle, we have $\omega_X = \pr^\ast \omega_Y$ with $\pr : X = Y \times Z \to Y$ the projection map. Therefore $X$ and $Y$ have the same Kodaira dimension, namely $m$ by construction. 
\end{proof}

\subsection{Convex Fujita numbers with prescribed fundamental group and dimension}
We now aim to prove \tref{thmABC:anyFNanyPi1} of the introduction.

\begin{thm}
\label{thm:anyFNanyPi1}
Let $\pi$ be a projective group, and let $n \geq 2$ be an integer, and let $0 \leq k \leq n-1$. There is a smooth connected projective variety $X$ of dimension $n$ with the following properties:
\begin{enumerate}[label=(\roman*),align=left,labelindent=0pt,leftmargin=*,widest = (iii)]
\item
$\pi_1(X)$ is isomorphic to $\pi$, and 
\item
$\conFN(X) = k$.
\end{enumerate}
\end{thm}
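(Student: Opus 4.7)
The plan is to split according to the pair $(n, k)$: the case $k = 0$ is supplied directly by earlier results, most pairs with $k \geq 1$ are handled by a product construction, and the two exceptional pairs $(2, 1)$ and $(3, 1)$ require dedicated arguments (a blow-up and a branched double cover, respectively). The building blocks used are the Fujita simple varieties with prescribed $\pi_1$ (\pref{prop:FNsimpleSurface} and \pref{prop:FNsimpleVariety}), the simply connected varieties with prescribed convex Fujita number (\tref{thm:FNsimplyconnected}), the product formula (\cref{cor:FNproductwithPn}), the blow-up of a surface at a point, and the branched double cover machinery (\lref{lem:pi1 in branched cover} and \pref{prop:FNin branched cover}).

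\textbf{Case $k = 0$.} Take $X$ as in \pref{prop:FNsimpleSurface} (for $n = 2$) or \pref{prop:FNsimpleVariety} (for $n \geq 3$).

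\textbf{Case $k \geq 1$ with $(n, k) \notin \{(2, 1), (3, 1)\}$.} Choose a Fujita simple surface $S$ with $\pi_1(S) \cong \pi$ from \pref{prop:FNsimpleSurface} and a simply connected smooth projective variety $Y$ of dimension $n - 2$ with $\conFN(Y) = k$ from \tref{thm:FNsimplyconnected}; when $(n, k) = (3, 2)$ simply set $Y := \bP^1$. In every sub-case $\rH^1(Y, \dO_Y) = 0$, since $Y$ is $\bP^1$, a projective complete intersection, or a rational surface. The K\"unneth formula and \cref{cor:FNproductwithPn} applied to $X := S \times Y$ then give $\pi_1(X) \cong \pi$ and
\[
\conFN(X) \,=\, \max\{\conFN(S), \conFN(Y)\} \,=\, k.
\]

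\textbf{Case $(n, k) = (2, 1)$.} Build $S$ Fujita simple with $\pi_1(S) \cong \pi$ as in \pref{prop:FNsimpleSurface}, arranging the auxiliary integer $p$ controlling the divisibility of intersection numbers on $\NS(S)$ to be a prime $p \geq 11$ (permitted because in the proof of \pref{prop:FNsimpleSurface} the integer $p$ may be taken arbitrarily large). Let $\sigma: \tilde S \to S$ be the blow-up at a single point, with exceptional divisor $E \cong \bP^1$. Then $\pi_1(\tilde S) \cong \pi$ by birational invariance, and since $K_{\tilde S}|_E \simeq \dO_E(-1)$ has no sections, $K_{\tilde S}$ fails to be globally generated along $E$ and $\conFN(\tilde S) \geq 1$. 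The reverse inequality is established by Reider's theorem: for an arbitrary ample $L = \sigma^* M_L - a_L E$, a potential Reider obstruction $D = \sigma^* M_D - a_D E$ with $L \cdot D \leq 1$ and $D^2 \in \{0, -1\}$, combined with $p \mid (M_L \cdot M_D)$ and $p \mid M_D^2$, forces incompatible congruences modulo $p$; the sub-case $L^2 < 5$ where Reider does not apply directly is handled by inspection, using that ampleness together with the $p$-divisibility pins down a very restricted set of exceptional $L$.

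\textbf{Case $(n, k) = (3, 1)$.} With $S$ Fujita simple and $\pi_1(S) \cong \pi$, let $X := S \times \bP^1$, so that $\pi_1(X) \cong \pi$ and $\conFN(X) = 2$ by \cref{cor:FNproductwithPn}. Take $M$ very ample on $S$ and $\cL := p_S^* M \boxtimes \dO_{\bP^1}(1)$, which is ample on $X$; pick a smooth $B \in |\cL^{\otimes 2}|$ via Bertini and form the double cover $f: X' \to X$ branched along $B$. Then \lref{lem:pi1 in branched cover} gives $\pi_1(X') \cong \pi$. Grothendieck duality together with $f_*\dO_{X'} = \dO_X \oplus \cL^{-1}$ yields
\[
f_* \omega_{X'} \,=\, \omega_X \,\oplus\, (\omega_X \otimes \cL),
\]
so $\rH^0(X', \omega_{X'}) = \rH^0(X, \omega_X) \oplus \rH^0(X, \omega_X \otimes \cL) = 0$, since both summands acquire a factor of $\rH^0(\bP^1, \dO(-2))$ or $\rH^0(\bP^1, \dO(-1))$. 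Hence $\omega_{X'}$ is a non-trivial line bundle with no sections and $\conFN(X') \geq 1$. The reverse inequality follows the logic of \pref{prop:FNin branched cover}, combined with $\conFN(X) \leq 2$: as \lref{lem:Pic in branched cover} is not directly available for $d = 2$ in dimension $3$, the argument is completed by observing that for every ample $\cL'$ on $X'$ the norm $\cL' \otimes \sigma^* \cL'$ descends to an ample $\cM \in \Pic(X)$, reducing the required global generation of $\omega_{X'} \otimes \cL' = f^*(\omega_X \otimes \cL) \otimes \cL'$ to that of $\omega_X \otimes \cL \otimes \cM$ on $X$ via a $\sigma$-equivariant analysis.

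The principal obstacle is the threefold case $(n, k) = (3, 1)$, where \pref{prop:FNin branched cover} cannot be cited off-the-shelf and one has to control ample line bundles on $X'$ that do not descend from $X$, either through a Noether--Lefschetz-type result for branched double covers or via the descent-and-norm argument sketched above. The surface case $(n, k) = (2, 1)$ amounts to excluding a short finite list of Reider configurations, which is made tractable by the flexibility to choose the prime $p$ as large as needed.
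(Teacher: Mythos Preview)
Your overall architecture matches the paper exactly: the case split, the product construction for the generic pairs, the blow-up for $(2,1)$, and the branched double cover of $S\times\bP^1$ for $(3,1)$. Two points, however, deserve attention.

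\textbf{The pair $(2,1)$.} The paper does not take a prime $p\geq 11$ but rather the composite value $p=24$, and this is not accidental. With $24$ one has $(L^2)\equiv -a^2\pmod{24}$; since the squares modulo $24$ are $\{0,1,4,9,12,16\}$, every positive value of $(L^2)$ is at least $8$, so Reider's theorem applies to \emph{every} ample $L$ and no ``inspection'' sub-case arises. With your choice $p=11$ this fails: $(M^2)=11$, $a=3$ gives $(L^2)=2$, and there is no evident way to handle such $L$ by inspection on an essentially arbitrary surface. Your approach is correct, but the numerics need to be chosen so that $-1,-2,-3,-4$ are simultaneously non-residues; $24$ does this for free.

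\textbf{The pair $(3,1)$.} Here there is a genuine gap. Your norm-and-descent sketch does not close: from global generation of $\omega_X\otimes\cL\otimes\cM$ on $X$ you obtain global generation of $f^*(\omega_X\otimes\cL\otimes\cM)=(\omega_{X'}\otimes\cL')\otimes\sigma^*\cL'$ on $X'$, but this says nothing about $\omega_{X'}\otimes\cL'$ itself, since $\sigma^*\cL'$ is ample rather than anti-ample. The paper resolves this by the route you mention but then set aside: the \emph{effective} Noether--Lefschetz theorem of Ravindra--Srinivas applies already for $d=2$, because the relevant hypothesis is only that $\omega_Y\otimes\cL^{\otimes 2}=\pr^*(\omega_S\otimes\cM^{\otimes 2})$ be globally generated, which follows from $\conFN(S)\leq 2$. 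For very general $B$ this gives $\Pic(Y)\xrightarrow{\sim}\Pic(B)$, and combined with the injectivity of $\Pic(X')\hookrightarrow\Pic(R)$ (Lefschetz in dimension $\geq 3$) one concludes $f^*:\Pic(Y)\xrightarrow{\sim}\Pic(X')$. Every ample line bundle on $X'$ then descends, and \pref{prop:FNin branched cover} applies verbatim to give $\conFN(X')\leq\max\{0,\conFN(Y)+1-2\}=1$. Your computation that $\rH^0(X',\omega_{X'})=0$ is correct and matches the paper's.
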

\begin{proof}
The case $k=0$ is the content of  \tref{thmABC:FNsimple}, so that case is done. Let $S$ be such a Fujita simple surface with $\pi_1(S) = \pi$. 

Let first $n$ be at least $4$. If $0 \leq k \leq n-1$, then there exists a simply connected variety $Y$ of dimension $n-2 \geq 2$ and convex Fujita number $k$ by \tref{thmABC:FNsimplyconnected}. By the K\"unneth formula the product $X = S \times Y$ has $\pi_1(X) = \pi_1(S) = \pi$, and, since $\rH^{1}(Y,\dO_{Y})=0$ for the simply connected variety $Y$, we have from \cref{cor:FNproductwithPn} that $\conFN(X) = \max\{0,k\} = k$.

If $n = 3$ and $k=2$, then we can take $Y = \bP^1$ with $\conFN(Y) = 2$ and again conclude that $X = S \times Y$ has the required fundamental group and convex Fujita number $2$. 

The only cases missing now are the case $(n,k) = (2,1)$ and $(3,1)$ which we deal with separately in \pref{prop:FNn2k1} and \pref{prop:FNn3k1} below.
\end{proof}

\begin{prop}
\label{prop:FNn2k1}
Let $\pi$ be a projective group. There exists a smooth projective surface $S$ such that the blow-up $S' \to S$ in a point yields a smooth projective surface with $\conFN(S') = 1$ and $\pi_1(S') =\pi$.
\end{prop}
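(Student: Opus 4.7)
The plan is to realize $S'$ as the blow-up of a Fujita simple surface $S$ constructed as in \pref{prop:FNsimpleSurface}, but with the prime $p$ chosen to satisfy an additional arithmetic condition so that Reider's theorem applies on the blow-up to every ample line bundle.

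First I would take a smooth projective threefold $Y$ with $\pi_1(Y) = \pi$, fix an ample divisor $H$, and select a prime $p$ with $p \equiv 23 \pmod{24}$ (for instance $p = 23$) large enough that $K_Y + pH$ is ample and globally generated. Let $S \in |pH|$ be a very general smooth member. By Noether--Lefschetz the restriction $\Pic(Y) \to \Pic(S)$ is an isomorphism, so the intersection pairing on $\NS(S)$ takes values in $p\bZ$, and as in the proof of \pref{prop:FNsimpleSurface} we have $\pi_1(S) = \pi$ and $\conFN(S) = 0$. Let $\sigma \colon S' \to S$ be the blow-up at any point $p_0 \in S$ with exceptional divisor $E$. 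Birational invariance yields $\pi_1(S') = \pi$, and the lower bound $\conFN(S') \geq 1$ follows at once from $K_{S'}|_E \cong \mathcal{O}_E(-1)$: every section of $K_{S'}$ vanishes identically on $E$.

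For the upper bound $\conFN(S') \leq 1$, the plan is to apply Reider's theorem on $S'$ to an arbitrary ample $L$. Writing $L = \sigma^* M - aE$ with $M \in \Pic(S)$ and $a \in \bZ$, the condition $L \cdot E > 0$ forces $a \geq 1$, and Nakai--Moishezon applied to the identity $\sigma^* M = L + aE$ shows that $M$ is ample on $S$. Since $(M^2) \in p\bZ$, one obtains the crucial congruence
\[
(L^2) \;=\; (M^2) - a^2 \;\equiv\; -a^2 \pmod{p}.
\]
The arithmetic choice $p \equiv 23 \pmod{24}$ is equivalent to $p \equiv 7 \pmod{8}$ together with $p \equiv 2 \pmod{3}$, which by quadratic reciprocity forces each of $-1, -2, -3, -4$ to be a quadratic non-residue modulo $p$. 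This rules out $(L^2) \in \{1,2,3,4\}$, so $(L^2) \geq 5$ and Reider applies. The Reider exceptional cases $(L \cdot C, C^2) \in \{(0,-1), (1,0)\}$ are then eliminated essentially as in \pref{prop:FNsimpleSurface}: for an effective $C$ written as $\sigma^* C_0 + bE$ with $C_0 = \sigma_* C$ effective on $S$, the relations $(M \cdot C_0) = 1 - ab$ and $(C_0^2) = b^2$ combined with divisibility of intersections on $S$ by the prime $p$ force $ab \equiv 1 \pmod{p}$ and $p \mid b$ simultaneously, contradicting $ab \equiv 0 \pmod{p}$. The degenerate case $C_0 = 0$ reduces to $C = bE$ with $(L \cdot C, C^2) = (ab, -b^2)$, which never matches either exceptional pair for $a, b \geq 1$.

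The main obstacle is this upper bound: after blowing up, the intersection form on $\NS(S')$ is no longer divisible by $p$ because of the $E^2 = -1$ contribution, so one cannot invoke \pref{prop:FNsurfacesReider divisible intersection numbers} directly on $S'$. The difficulty is precisely the possibility of ample $L$ on $S'$ with self-intersection in $\{1,2,3,4\}$, where Reider's theorem is unavailable; the number-theoretic choice of $p$ is tailored to rule out exactly these cases while still remaining compatible with the threefold-plus-Noether--Lefschetz setup of \pref{prop:FNsimpleSurface}. Infinitely many primes satisfy $p \equiv 23 \pmod{24}$ by Dirichlet, so one can always pick $p$ large enough to ensure $K_Y + pH$ is ample and globally generated.
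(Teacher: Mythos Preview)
Your proof is correct and follows the same strategy as the paper's: realize $S$ as a very general ample hypersurface section of a threefold $Y$ with $\pi_1(Y)=\pi$ so that Noether--Lefschetz makes every intersection number on $S$ divisible by the chosen degree, blow up a point, and run Reider on $S'$. The only difference is the arithmetic bookkeeping. The paper takes the degree to be $24$ (replacing $H$ by a multiple first so that Noether--Lefschetz applies): since squares modulo $24$ lie in $\{0,1,4,9,12,16\}$, the congruence $(L^2)\equiv -a^2\pmod{24}$ already forces $(L^2)\geq 8$, and in the exceptional case $24\mid m_C^2$ gives $12\mid m_C$, hence $12\mid (M\cdot C)-am_C=1$. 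Your route through a prime $p\equiv 23\pmod{24}$ and quadratic reciprocity is a legitimate variant; primality makes the step $p\mid b^2\Rightarrow p\mid b$ clean, at the price of invoking Dirichlet and reciprocity where the paper gets by with an explicit list of squares mod $24$. One cosmetic remark: the Reider case $(L\cdot C,C^2)=(0,-1)$ is vacuous because $L$ is ample and $C$ effective, so only the $(1,0)$ case needs the divisibility argument you give; also the pointer ``as in \pref{prop:FNsimpleSurface}'' is slightly off, since that proposition delegates to \pref{prop:FNsurfacesReider divisible intersection numbers} rather than analyzing Reider exceptional curves directly.
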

\begin{proof}
We can argue as in the proof of  \pref{prop:FNsimpleSurface} that there is a smooth projective variety $Y$ of general type with $\pi_1(Y) = \pi$ and dimension $3$. Now we choose a very ample line bundle $\dO(1)$ on $Y$ and choose $S$ a very general smooth divisor in the linear system of $\dO(24)$ by Bertini's theorem. Upon replacing $\dO(1)$ by a multiple initially, we may assume by Noether-Lefschetz, see \cite{joshi_noether-lefschetz_1995}, that $\Pic(S) = \Pic(Y)$. The choice of the power $24$ then forces all intersection numbers of line bundles on $S$ to be divisible by $24$.

By the Lefschetz hyperplane theorem and the birational invariance of the fundamental group the blow up $\sigma: S' \to S$ in a choice of a point $P$ on $S$ is a smooth projective surface $S'$ with 
\[
\pi_1(S') = \pi_1(S) = \pi_1(Y) = \pi.
\]

Since $S'$ is not minimal the canonical divisor $K_{S'}$ is not nef and a fortiori not globally generated. Thus $\conFN(S') \geq 1$. It remains to show that for all ample divisors $L$ on $S'$ the adjoint divisor $K_{S'} + L$ is globally generated. 

Since $\Pic(S')$ equals $\Pic(S) \oplus \bZ$, with the summand $\bZ$ spanned by the exceptional divisor $E = \sigma^{-1}(P)$, we find that $L = \sigma^\ast M -aE$ for some divisor $M$ on $S$  and $a \in \bZ$. We compute
\[
(L^2) = (M^2) - a^2 > 0,
\]
so modulo $24$ it's the negative of a square. Squares modulo $24$ are $0,1,4,9,12,16$, hence 
\[
(L^2) \geq 8.
\]
Reider's theorem \cite[Theorem 1 (i)]{reider_vector_1988} implies that $K_{S'}  + L$ is globally generated unless we are in the exceptional case: there is an effective divisor $C'$ on $S'$ such that $(C'^2) = 0$ and $(L \cdot C') = 1$. Since $L$ is ample and $C'$ is effective, it follows from $(L \cdot  C') = 1$  that $C'$ must be irreducible and reduced. As $(E^2) = -1$ we can exclude $C' = E$. Hence $C'$ is the strict transform of an irreducible and reduced curve $C$ on $S$. Let $m_C$ be the multiplicity of $C$ in $P$. Then 
\[
0 = (C'^2) = (C^2) - m_C^2 \equiv -m_C^2  \pmod{24}.
\]
It follows that $12$ divides $m_C$. On the other hand, we have $(M \cdot C) \equiv 0 \pmod{24}$ and so
\[
1 = (L \cdot C') =  (M \cdot C) - a m_C \equiv 0 \pmod{12},
\]
a contradiction. 
\end{proof}

\begin{prop}
\label{prop:FNn3k1}
Let $\pi$ be a projective group, and let $S$ be a smooth projective surface with $\conFN(S) \leq 2$. Let $\dM$ be a very ample line bundle on $S$, and set $\cL = \dM \boxtimes \dO(1)$ on $Y = S \times \bP^1$. Then the double cover $f:X \to Y$ branched in a very general smooth divisor $B$ of the linear system associated to $\cL^{\otimes 2}$ is a smooth projective threefold with $\conFN(X) = 1$ and $\pi_1(X) =\pi$.
\end{prop}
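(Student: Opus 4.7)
\emph{Setup.} First I would check smoothness and projectivity: Bertini guarantees a smooth $B \in |\cL^{\otimes 2}|$, and the standard machinery of cyclic covers supplies $X$ as a smooth projective threefold. For the fundamental group, the ampleness of $\cL = \dM \boxtimes \dO(1)$ on the threefold $Y = S \times \bP^1$ places us within the scope of \lref{lem:pi1 in branched cover}, so $\pi_1(X) \cong \pi_1(Y)$, and the K\"unneth formula for $\pi_1$ then gives $\pi_1(Y) = \pi_1(S) \times \pi_1(\bP^1) = \pi$.

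\emph{Lower bound $\conFN(X) \geq 1$.} My plan is to prove the stronger statement $\rH^0(X, \omega_X) = 0$, which immediately precludes $\omega_X$ being globally generated. The canonical bundle formula \eqref{eq:ramifiedcanonical bundle formula} at $d = 2$ reads
\[
\omega_X \simeq f^\ast(\omega_Y \otimes \cL) \simeq f^\ast\big((\omega_S \otimes \dM) \boxtimes \dO_{\bP^1}(-1)\big),
\]
and combining it with the projection formula and the splitting $f_\ast \dO_X \simeq \dO_Y \oplus \cL^{-1}$ yields $\rH^0(X,\omega_X) \cong \rH^0(Y, \omega_Y \otimes \cL) \oplus \rH^0(Y, \omega_Y)$. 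K\"unneth together with $\rH^0(\bP^1, \dO_{\bP^1}(-k)) = 0$ for $k \geq 1$ kills both summands.

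\emph{Upper bound $\conFN(X) \leq 1$.} The key claim I need is that, for very general $B$, pullback gives $f^\ast : \Pic(Y) \xrightarrow{\sim} \Pic(X)$. Granting this, any ample $\cL'$ on $X$ equals $f^\ast \cN$ for an ample $\cN$ on $Y$, and the K\"unneth decomposition $\Pic(Y) = \Pic(S) \oplus \bZ$ lets me write $\cN = \dN \boxtimes \dO_{\bP^1}(n)$ with $\dN$ ample on $S$ and $n \geq 1$. The canonical bundle formula then gives
\[
\omega_X \otimes \cL' \simeq f^\ast\big((\omega_S \otimes \dM \otimes \dN) \boxtimes \dO_{\bP^1}(n-1)\big).
\]
Since $\dM, \dN$ are two ample bundles on $S$ and $\conFN(S) \leq 2$, the surface factor $\omega_S \otimes \dM \otimes \dN$ is globally generated; $\dO_{\bP^1}(n-1)$ is globally generated for $n \geq 1$; K\"unneth produces global generation of the external product, and pullback by $f$ preserves it.

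\emph{Main obstacle.} The crux is the Picard identification at $d = 2$, which falls outside the $d \gg 0$ hypothesis of \lref{lem:Pic in branched cover}. I would obtain it by applying Joshi's Noether--Lefschetz result \cite{joshi_noether-lefschetz_1995} directly to $B \in |\cL^{\otimes 2}|$ on the threefold $Y$---if necessary after replacing $\dM$ by a sufficiently high power, which is harmless because the hypothesis $\conFN(S) \leq 2$ is insensitive to the specific choice of ample $\dM$---to conclude $\Pic(Y) \xrightarrow{\sim} \Pic(B)$, and then combining it with the Lefschetz hyperplane injection $\Pic(X) \inj \Pic(R)$ and the isomorphism $f|_R : R \xrightarrow{\sim} B$, exactly as in the $\dim(Y) = 3$ portion of the proof of \lref{lem:Pic in branched cover}.
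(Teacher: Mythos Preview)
Your overall architecture---lower bound via $\rH^0(X,\omega_X)=0$, upper bound via the Picard identification $f^\ast:\Pic(Y)\xrightarrow{\sim}\Pic(X)$ followed by a global-generation check on $Y$---matches the paper exactly, and your treatment of $\pi_1$ and of the lower bound is identical to the paper's. The one substantive difference is how you secure the Noether--Lefschetz input $\Pic(Y)\xrightarrow{\sim}\Pic(B)$. You invoke Joshi \cite{joshi_noether-lefschetz_1995} and propose, if needed, to replace $\dM$ by a high power; this suffices for the application in \tref{thm:anyFNanyPi1}, but it does not prove the proposition as stated for an \emph{arbitrary} very ample $\dM$. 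The paper instead uses the effective Noether--Lefschetz theorem of Ravindra--Srinivas \cite{ravindra_girivaru_v_noetherlefschetz_2009}, whose hypothesis is that $\omega_Y(B)$ be globally generated; the paper observes that
\[
\omega_Y(B)=\omega_Y\otimes\cL^{\otimes 2}=(\omega_S\otimes\dM^{\otimes 2})\boxtimes\dO_{\bP^1}(0)=\pr^\ast(\omega_S\otimes\dM^{\otimes 2}),
\]
and this is globally generated precisely because $\conFN(S)\leq 2$. So the hypothesis $\conFN(S)\leq 2$ does double duty in the paper's proof---once for the Noether--Lefschetz step and once (via \pref{propABC:FNproductwithPn} and \pref{prop:FNin branched cover}) for the upper bound---and no modification of $\dM$ is required. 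Your direct K\"unneth computation for the upper bound is a fine alternative to the paper's citation of \pref{prop:FNin branched cover}; it is the same argument unpacked.
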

\begin{proof}
Let $\pr: Y \to S$ be the projection map. Since $\conFN(S) \leq 2$, the line bundle
\[
\omega_Y(B) = \big(\omega_S \otimes \cM^{\otimes 2}\big) \boxtimes \big(\omega_{\bP^1} (2)\big) = \pr^\ast(\omega_S \otimes \cM^{\otimes 2})
\]
is globally generated. Therefore the  effective version of the Noether-Lefschetz theorem proved in 
\cite[Theorem~1]{ravindra_girivaru_v_noetherlefschetz_2009} applied to $\dO_Y(1) \coloneq \cL^{\otimes 2}$ shows that the restriction map 
\[
\Pic(Y) \xrightarrow{\sim} \Pic(B)
\]
is an isomorphism. This suffices to show as in \lref{lem:Pic in branched cover} that
\[
f^\ast : \Pic(Y) \to \Pic(X)
\]
is an isomorphism. By \pref{propABC:FNproductwithPn} we have $\conFN(Y) = 2$, and by 
 \pref{prop:FNin branched cover} we have $\conFN(X) \leq 1$. It remains to show that $\omega_X$ is not globally generated. We compute its global sections using $f_\ast \dO_X = \dO_Y \oplus \cL^{-1}$ as
 \[
 \rH^0(X,\omega_X) = \rH^0(X, f^\ast(\omega_Y \otimes \cL)) = \rH^0(Y, \omega_Y \otimes \cL) \oplus \rH^0(Y,\omega_Y).
 \]
 Furthermore, by the K\"unneth formula and the product structure of $Y$ and $\omega_Y \otimes \cL$ 
 \begin{align*}
 \rH^0(Y, \omega_Y \otimes \cL) & = \rH^0(S,\omega_S \otimes \cM) \otimes \rH^0(\bP^1,\dO(-1)) = 0, \\
  \rH^0(Y,\omega_Y) & = \rH^0(S,\omega_S) \otimes \rH^0(\bP^1,\dO(-2)) = 0, 
 \end{align*}
 both vanish. Hence $\omega_X$ has no global sections and the proof is complete.  
\end{proof}

\begin{rmk}
A result similar in nature to \tref{thm:anyFNanyPi1} can be found in 
\cite[Proposition~26]{Debarre2005:VarietiesAmpleCotangent}. Based on earlier ideas and results due to Bogomolov, Debarre proves that for every projective group $\pi$ there is a smooth projective surface  with ample cotangent bundle and fundamental group isomorphic to $\pi$.
\end{rmk}

%

\begin{bibdiv}
\begin{biblist}

\bib{angehrn_effective_1995}{article}{
      author={Angehrn, Urban},
      author={Siu, Yum~Tong},
       title={Effective freeness and point separation for adjoint bundles},
      date={1995},
      journal={Inventiones Mathematicae},
      volume={122},
      number={2},
      pages={291\ndash 308},
}

\bib{bauer_tensor_1996}{article}{
      author={Bauer, Thomas},
      author={Szemberg, Tomasz},
       title={On tensor products of ample line bundles on abelian varieties},
       date={1996},
      journal={Mathematische Zeitschrift},
      volume={223},
      number={1},
      pages={79\ndash 85},
}

\bib{Debarre2005:VarietiesAmpleCotangent}{article}{
  author = {Debarre, Olivier},
  title = {Varieties with ample cotangent bundle},
  date = {2005},
  journal = {Compositio Mathematica},
  volume = {141},
  number = {6},
  pages = {1445\ndash 1459},
}

\bib{ein_global_1993}{article}{
      author={Ein, Lawrence},
      author={Lazarsfeld, Robert},
       title={Global generation of pluricanonical and adjoint linear series on smooth projective threefolds},
       date={1993},
     journal={Journal of the American Mathematical Society},
      volume={6},
      number={4},
      pages={875\ndash 903},
}

\bib{ein_syzygies_1993}{article}{
      author={Ein, Lawrence},
      author={Lazarsfeld, Robert},
      title={Syzygies and {Koszul} cohomology of smooth projective varieties of arbitrary dimension},
      date={1993},
     journal={Inventiones Mathematicae},
      volume={111},
      number={1},
      pages={51\ndash 67},
}

\bib{fujita_polarized_1987}{article}{
      author={Fujita, Takao},
       title={On {polarized} {manifolds} {whose} {adjoint} {bundles} {are} {not} {semipositive}},
       date={1987},
     journal={Algebraic Geometry, Sendai, 1985},
      volume={10},
       pages={167\ndash 179},
}

\bib{ghidelli_logarithmic_2021}{article}{
      author={Ghidelli, Luca},
      author={Lacini, Justin},
       title={Logarithmic bounds on {Fujita}'s conjecture},
       FJOURNAL={Proceedings of the London Mathematical Society. Third Series},
      date={2024},
      journal={Proc. Lond. Math. Soc. (3)},
      volume={128},
	NUMBER = {3},      
	pages={Paper No. e12591, 28},
}

\bib{heier_effective_2002}{article}{
      author={Heier, Gordon},
      title={Effective freeness of adjoint line bundles},
      date={2002},
     journal={Documenta Mathematica},
      volume={7},
      pages={31\ndash 42},
}

\bib{helmke_fujitas_1997}{article}{
      author={Helmke, Stefan},
       title={On {Fujita}'s conjecture},
        date={1997},
     journal={Duke Mathematical Journal},
      volume={88},
      number={2},
       pages={201\ndash 216},
}

\bib{joshi_noether-lefschetz_1995}{article}{
      author={Joshi, Kirti},
       title={A {Noether}-{Lefschetz} theorem and applications},
        date={1995},
     journal={Journal of Algebraic Geometry},
      volume={4},
      number={1},
       pages={105\ndash 135},
}

\bib{kawamata_fujitas_1997}{article}{
      author={Kawamata, Yujiro},
       title={On {Fujita}'s freeness conjecture for 3-folds and 4-folds},
        date={1997},
     journal={Mathematische Annalen},
      volume={308},
      number={3},
       pages={491\ndash 505},
}

\bib{kulikov_numerically_2014}{article}{
      author={Kulikov, Viktor~Stepanovich},
      author={Kharlamov, Viatcheslav},
       title={On numerically pluricanonical cyclic coverings},
        date={2014},
     journal={Izvestiya: Mathematics},
      volume={78},
      number={5},
       pages={986},
}

\bib{kuronya_effective_2022}{misc}{
      author={K{\"u}ronya, Alex},
      author={Mustopa, Yusuf},
       title={Effective global generation on varieties with numerically trivial canonical class},
       publisher={arXiv:1810.07079},
       year={2022},
}

\bib{laterveer_linear_1996}{article}{
      author={Laterveer, Robert},
       title={Linear systems on toric varieties},
        date={1996},
     journal={The Tohoku Mathematical Journal. Second Series},
      volume={48},
      number={3},
       pages={451\ndash 458},
}

\bib{lefschetz_certain_1921}{article}{
      author={Lefschetz, Solomon},
       title={On certain numerical invariants of algebraic varieties with
  application to abelian varieties},
        date={1921},
     journal={Transactions of the American Mathematical Society},
      volume={22},
      number={3},
       pages={327\ndash 406},
}

\bib{mustata_vanishing_2002}{article}{
      author={Musta\c{t}\u{a}, Mircea},
       title={Vanishing theorems on toric varieties},
        date={2002},
     journal={The Tohoku Mathematical Journal. Second Series},
      volume={54},
      number={3},
       pages={451\ndash 470},
}

\bib{nori_zariski_1983}{article}{
	author = {Nori, Madhav V.},
     	title = {Zariski's conjecture and related problems},
	date = {1983},
   	journal = {Ann. Sci. \'Ecole Norm. Sup. (4)},
	volume = {16}, 
	number = {2},
	pages = {305 \ndash 344}
}

\bib{pareschi_regularity_2003}{article}{
      author={Pareschi, Giuseppe},
      author={Popa, Mihnea},
       title={Regularity on abelian varieties. {I}},
        date={2003},
     journal={Journal of the American Mathematical Society},
      volume={16},
      number={2},
       pages={285\ndash 302},
}

\bib{reider_vector_1988}{article}{
      author={Reider, Igor},
       title={Vector {Bundles} of {Rank} 2 and {Linear} {Systems} on   {Algebraic} {Surfaces}},
        date={1988},
     journal={The Annals of Mathematics (2)},
      volume={127},
      number={2},
       pages={309\ndash 316},
}

\bib{ries_non-divisorial_2021}{article}{
      author={Rie{\ss}, Ulrike},
       title={On the non-divisorial base locus of big and nef line bundles on
  {K3}$^{\textrm{[2]}}$-type varieties},
        date={2021},
     journal={Proceedings of the Royal Society of Edinburgh. Section A.  Mathematics},
      volume={151},
      number={1},
       pages={52\ndash 78},
}

\bib{ravindra_girivaru_v_noetherlefschetz_2009}{article}{
      author={Ravindra, Girivaru~V.},
      author={Srinivas, Vasudevan},
       title={The {Noether}--{Lefschetz} theorem for the divisor class group},
        date={2009},
     journal={Journal of Algebra},
      volume={322},
      number={9},
       pages={3373\ndash 3391},
}

\bib{SGA1}{book}{
         editor={Grothendieck, Alexander},
         TITLE = {Rev\^{e}tements \'{e}tales et groupe fondamental},
	SERIES = {Lecture Notes in Mathematics, Vol. 224},
   	Author = {Alexandre Grothendieck},
     	NOTE = {S\'{e}minaire de G\'{e}om\'{e}trie Alg\'{e}brique du Bois Marie 1960--1961
              (SGA 1),
              Dirig\'{e} par Alexandre Grothendieck. Augment\'{e} de deux expos\'{e}s de
              Mich{\`{e}}le Raynaud},
 	PUBLISHER = {Springer-Verlag, Berlin-New York},
      	YEAR = {1971},
     	PAGES = {xxii+447},
        	label={SGA 1},
}

\bib{grothendieck_cohomologie_1965}{book}{
      editor={Grothendieck, Alexander},
       title={Cohomologie locale des faisceaux coh{\'e}rents et th{\'e}or{\`e}mes de
  {Lefschetz} locaux et globaux},
      publisher={Institut des Hautes {\'E}tudes Scientifiques, Paris},
      date={1965},
      label={SGA 2},
}

\bib{deligne_groupes_1973}{book}{
      author={Deligne, Pierre},
      author={Katz, Nicholas~M},
      title={Groupes de monodromie en g{\'e}om{\'e}trie alg{\'e}brique. {II}},
      series={Lecture {Notes} in {Mathematics}, {Vol}. 340},
      publisher={Springer-Verlag, Berlin-New York},
      label={SGA $7_{II}$},
      date={1973},
}

\bib{ye_fujitas_2020}{article}{
      author={Ye, Fei},
      author={Zhu, Zhixian},
       title={On {Fujita}'s freeness conjecture in dimension 5},
        date={2020},
     journal={Advances in Mathematics},
      volume={371},
       pages={107210, 56pp},
}

\end{biblist}
\end{bibdiv}

\end{document}